\pdfoutput=1
\documentclass[11pt,reqno]{amsart}
\usepackage[paperheight=279mm,paperwidth=18cm,textheight=26cm,textwidth=14cm,includehead]{geometry}
\usepackage[T1]{fontenc}
\usepackage[utf8]{inputenc}
\usepackage[unicode]{hyperref}
\hypersetup{
bookmarks=true,
colorlinks=true,
citecolor=[rgb]{0,0,0.5},
linkcolor=[rgb]{0,0,0.5},
urlcolor=[rgb]{0,0,0.75},
pdfpagemode=UseNone,
pdfstartview=FitH,
pdfdisplaydoctitle=true,
pdftitle={Bi-parameter Carleson embeddings with product weights},
pdfauthor={Arcozzi, Mozolyako, Psaromiligkos, Volberg, Zorin-Kranich},
pdflang=en-US
}

\usepackage[style=alphabetic,maxalphanames=4]{biblatex}
\addbibresource{biparameter-embedding.bib}

\usepackage{amssymb}
\usepackage{amsmath}
\usepackage{amsthm}
\usepackage{amsfonts}
\usepackage{mathtools}

\usepackage{tikz}
\usetikzlibrary{patterns}


\DeclarePairedDelimiterX\Set[1]\{\}{%

#1
}

\DeclarePairedDelimiterX\innerp[2]{\langle}{\rangle}{#1,#2}


\numberwithin{equation}{section}
\theoremstyle{plain}
\newtheorem{theorem}[equation]{Theorem}

\newtheorem{lemma}[equation]{Lemma}

\newtheorem{conj}[equation]{Conjecture}

\theoremstyle{remark}
\newtheorem{remark}[equation]{Remark}

\theoremstyle{definition}
\newtheorem{definition}[equation]{Definition}



\DeclareMathOperator{\sign}{sign}

\def\sign{\operatorname{sgn}}

\def\Ker{\operatorname{Ker}}

\def\dist{\operatorname{dist}}


\newcommand{\al}{\alpha}
\newcommand{\eps}{\varepsilon}
\newcommand{\la}{\lambda}
\newcommand{\vf}{\varphi}


\newcommand{\cL}{\mathcal L}

\newcommand{\cP}{\mathcal P}

\newcommand{\bP}{\mathbb P}
\newcommand{\bR}{\mathbb R}

\newcommand{\bZ}{\mathbb Z}

\newcommand{\bS}{\mathbb S}

\begin{document}

\title[Orthogonality in normed spaces]%
{Orthogonality in normed spaces}
\author[B.~Burshteyn]{Boris Burshteyn}
\address[B.~Burshteyn]{}
\email{boris997@astound.net}
\author[A.~Volberg]{Alexander Volberg}
\thanks{AV is partially supported by the NSF grant  DMS 1900268}
\address[A.~Volberg]{Department of Mathematics, MSU, East Lansing, MI. 48823 and Hausdorff Center of Universit\"at Bonn}
\email{volberg@math.msu.edu}
\subjclass[2020]{46B20, 46E30}
%
%
\begin{abstract}
Motivated by the questions  in the theory of Fredholm stability in Banach space and Kato's strictly singular operators we answer several natural  questions concerning  ``orthogonality'' in normed spaces and the properties of metric projections. What the reader will see below might have benn known long ago, but we did not find it in the literature. Some open (for us)  questions are formulated at the end of Sections 7 and 9.
\end{abstract}
\maketitle

\section{Introduction}

The geometry of finite-dimensional normed spaces might be viewed upon from two perspectives: finite-dimensional subspaces of infinite-dimensional spaces, and finite-dimensional spaces proper, without the enveloping infinite-dimensional space. The former category includes divers areas such as bases in Banach spaces, extensions of linear operators, complemented subspaces and inherently-indecomposable spaces, and many more. The latter explores a ``pure'' geometry of normed spaces intrinsically connected with the properties of finite-dimensional convex symmetric bodies which define the norm of the space. 
A relatively recent comprehensive overview of the geometry of Banach spaces is found in the compendium by W.B. Johnson and J. Lindenstrauss \cite{JL}. 

\medskip

This present paper explores one aspect of these directions which, to our best knowledge, has not attracted much attention so far.  It is related to  a result of Krein--Krasnoselski--Milman from \cite{KKM}: 

\begin{theorem}\textup(Theorem 2 of \cite{KKM}\textup)
\label{kkm}
Given any two finite-dimensional subspaces $F$, $E$ of a normed space, if $\dim F > \dim E$, then $F$ has a unit vector whose distance to $E$ is  $1$. 
\end{theorem}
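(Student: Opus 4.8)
The plan is to reduce the problem to finite dimensions and then apply the Borsuk--Ulam theorem to the duality map of a smoothed norm. First I would note that only the subspace $X_0:=E+F$ is relevant: for $x\in F$ one has $\dist(x,E)=\inf_{e\in E}\norm{x-e}$ with every $x-e\in X_0$, and unit vectors of $F$ are the same whether $F$ is viewed inside $X$ or inside $X_0$. So I may assume $X=E+F$ is finite-dimensional; put $n:=\dim F$, $m:=\dim E$, so $n>m$. Next I would record the basic reformulation, valid for a unit vector $x\in F$:
\[
\dist(x,E)=1
\iff
\norm{x-e}\ge\norm{x}\ \text{for all }e\in E
\iff
\exists\,\xi\in X^*,\ \norm{\xi}_*=1,\ \xi|_E=0,\ \innerp{\xi}{x}=1 .
\]
The first equivalence holds because $\norm{x-0}=\norm{x}=1$; the second is the Hahn--Banach duality $\dist(x,E)=\max\{\innerp{\xi}{x}:\xi\in X^*,\ \xi|_E=0,\ \norm{\xi}_*\le1\}$ (the maximum is attained in finite dimensions, and $\innerp{\xi}{x}=1$ forces $\norm{\xi}_*=1$). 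As a plausibility check, when $E\subseteq F$ the conclusion is transparent: take a hyperplane $H$ of $F$ with $E\subseteq H$ (possible since $m<n$) and a supporting hyperplane $x_0+H$ of the unit ball of $F$, with contact point $x_0\in S_F$; then $x_0\pm e\in x_0+H$ lies off the open unit ball, so $\norm{x_0-e}\ge1$ for all $e\in E$.

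For the general case, here is the topological core. Suppose first that the norm of $X$ is of class $C^1$. Then the duality map $J\colon S_X\to S_{X^*}$, where $J(x)$ is the unique norm-one functional with $\innerp{J(x)}{x}=1$, is continuous, and it is always odd, $J(-x)=-J(x)$. Fixing a basis $e_1,\dots,e_m$ of $E$, the map
\[
g\colon S_F\to\bR^m,\qquad g(x)=\bigl(\innerp{J(x)}{e_1},\dots,\innerp{J(x)}{e_m}\bigr)
\]
is continuous and odd. Radial projection is an odd homeomorphism of the unit sphere $S_F$ of the $n$-dimensional space $F$ onto the standard sphere $S^{n-1}$; composing with the inclusion $\bR^m\hookrightarrow\bR^{n-1}$ and using $m\le n-1$, the Borsuk--Ulam theorem furnishes $x_0\in S_F$ with $g(x_0)=0$, i.e.\ $J(x_0)|_E=0$. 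By the reformulation above, $\dist(x_0,E)=1$. This is the only place where the hypothesis $\dim F>\dim E$ enters, and it is used in an essential way.

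To remove the smoothness assumption I would approximate: it is classical that every norm on the finite-dimensional space $X$ is a limit, uniform on the unit sphere (hence on bounded sets), of $C^1$ norms $\norm{\,\cdot\,}_k$ with $(1-\eps_k)\norm{\,\cdot\,}\le\norm{\,\cdot\,}_k\le(1+\eps_k)\norm{\,\cdot\,}$ and $\eps_k\to0$. Applying the previous step to $\norm{\,\cdot\,}_k$ yields $x_k$ on the $\norm{\,\cdot\,}_k$-unit sphere of $F$ with $\norm{x_k-e}_k\ge1$ for all $e\in E$. The vectors $x_k$ are bounded in $F$, so a subsequence converges to some $x_0\in F$; passing to the limit (using uniform convergence of $\norm{\,\cdot\,}_k$ on bounded sets) gives $\norm{x_0}=1$ and $\norm{x_0-e}\ge1$ for every $e\in E$, whence $\dist(x_0,E)=1$ and $x_0$ is the required unit vector of $F$. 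I expect the main obstacle to be precisely this non-smoothness (and possible lack of strict convexity) of a general norm, which makes $J$ multivalued and forces the approximation argument; once one is allowed to pass to a $C^1$ norm, the statement is essentially immediate from Borsuk--Ulam together with the dimension inequality.
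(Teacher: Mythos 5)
Your proof is correct, and it shares the paper's topological core (Borsuk--Ulam applied to an odd continuous map on $S_F$, plus approximation of the norm to handle non-smoothness), but the specific map you feed into Borsuk--Ulam is different and the execution is cleaner in two respects. The paper's first proof sends $v\in S_F$ to the metric-projection $\xi(v)\in E$ (the minimizer of $\norm{v-e}$), and its second proof sends $v$ to the maximizer $e_v\in S_E$ of $f_v$; both of those need uniqueness of a minimizer/maximizer over $E$, which is a strict-convexity condition on the norm restricted to $E$, and both take the contrapositive/antipodal form of Borsuk--Ulam (no odd map $S^{n-1}\to S^{m-1}$ when $n>m$). You instead use the duality map $J$ directly and look at the coordinate map $g(x)=(\innerp{J(x)}{e_i})_{i=1}^m$; this is single-valued and continuous as soon as the norm is $C^1$, without any strict convexity, and you apply the zero-set form of Borsuk--Ulam. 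The Hahn--Banach reformulation $\dist(x,E)=1\iff\exists\,\xi\in S_{X^*},\ \xi|_E=0,\ \xi(x)=1$ then closes the loop. A second, genuine improvement: the paper, after approximating the unit ball by a smooth convex body, only concludes $\varepsilon$-orthogonality and leaves it there, whereas you take a convergent subsequence of the approximating $x_k$ and pass to the limit, producing an exactly orthogonal unit vector for an arbitrary norm -- which is what the statement of Theorem~\ref{kkm} actually asserts. In short, same two ingredients, but your choice of odd map avoids the uniqueness issues the paper glosses over, and your compactness step is needed and correctly supplied.
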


This interesting theorem has been used to prove many facts in Functional Analysis (say, in the theory of Fredholm stability and in construction of bases, just to mention a few). Its proof is based on Theorem  by Lusternik--Schnirelmann  \cite{LS}  proved independently  by Borsuk--Ulam \cite{Bo}.

\medskip

What we consider below is related to the subject of minimal projections (projections with norm $1$ in normed spaces). Many authors have studied their properties in the context of functional analysis and approximation theory \cite{Fr}, \cite{KT}, \cite{KST}, \cite{LO}.  It turned out that the question of minimal projections is related to the question of maximal projections. Paper \cite{TK} explains this.

As an  application of the main result of \cite{TK},  the author observes that every $n$-dimensional normed space $X$ which has an $n-1$-dimensional subspace $Y$ with the maximal possible relative projection constant   of $X$ onto $Y$ (=$2-\frac2n$ according to Bohnenblust, \cite{Bohn})  has also a $2$-dimensional subspace with minimal possible relative projection constant $1$ (see Corollary 2.6, \cite{TK}).

\medskip

The relation with what follows lies in the following fact: let  $F$,  $K$  be  subspaces of finite-dimensional normed space $X$, and let the projection of $E=\text{span} (F, K)$ onto $K$ parallel to $F$ has norm $1$. Then $K$ is orthogonal to $F$ (see Definition \ref{orthdef} below). The converse is also true, orthogonality of $K$ to $F$ means the minimality of the norm of the  projection of $E$ onto $K$ parallel to $F$. In particular,  from Theorem \ref{kkm} below it follows that for any $n$-dimensional normed space $X$ and for any  subspace  $Y$ of dimension $n-1$,  we can find a $1$-dimensional subspace $K$, such that the projection onto $K$ parallel to $Y$ has norm $1$.

However, we will show that if  the words ``for any  subspace  $Y$ of dimension $n-1$'' is replaced by the words ``for any  subspace  $Y$ of dimension $n-2$'', then finding ``the right'' subspace $K$ of  dimension $2$ having the norm of the projection onto it parallel to $Y$ with norm $1$  can be impossible.

\medskip

Two counter-examples in this paper show that \cite{KKM} theorem cannot be extended from finding one unit vector to finding a $k$-dimensional subspace in $F$ which is ``orthogonal'' to $E$ with $k > 1$. The first example is in the infinite-dimensional space $L^p$, the second one is in a finite-dimensional normed space $\ell^p_N$. The paper is organized as follows:
Section 2 formulates questions 1 and 2.
Section 3 gives a definition of orthogonality.
Section 4 presents a counter-example to question 1.
Section 5 presents  partial result based on Borsuk--Ulam or Lusternik--Schnirelmann theorem and explains its link to question 2.
Section 6 presents a counter-example to question 2 in the case of dimension 2 orthogonality.
Section 7 presents further counter-examples.
Section 8 explains the  connection between \cite{KKM}, question 2, and the  cohomology theory.
Section 9 explains the source of both questions from the point of view of the Fredholm stability theory and formulates still unresolved questions related to questions 1 and 2.

{\bf Acknowledgement.} We are grateful to Mark Rudelson for valuable discussions.

\section{Two questions}

\begin{definition}
\label{orthdef}
Let $X_1$, $X_2$ are two subspaces of a normed space $Y$. We say that $X_1$ is orthogonal to $X_2$ if for every unit vector in $X_1$ the distance to $X_2$ is $1$. We say that $X_1$ is $\eps$-almost orthogonal to $X_2$ if this distance is $\ge 1-\eps$ for every unit vector in $X_1$.
\end{definition}

\begin{remark}
Notice that if $X_1$ is orthogonal to $X_2$ then it might very well happen that $X_2$ is not orthogonal to $X_1$.
\end{remark}

Question 1. Let $E$ and $F$ be two finite dimensional  subspaces of a normed space $X$, and $\dim E<\dim F$. Let $\eps>0$ be given.   Is it true that
there is a non-trivial subspace $K$ of $F$ such that $E$  is $\eps$-orthogonal to $K$?

\medskip

Question 2. Let $E$ and $F$ be two finite dimensional normed subspaces of a normed $X$, and $\dim E<\dim F$. Let $\eps>0$ be given. Is it true that
there is a non-trivial subspace $K$ of $F$ such that $K$  is $\eps$-orthogonal to $E$? If so, when $\eps$ can be made $0$? Can we find such $K$ with the property that $\dim K= \dim F-\dim E$? If not what maximal dimension of $K$ is possible?

\medskip

\section{Preliminaries: metric projections and orthogonality in Banach spaces}
\label{metric}

Let $X$ be (it is convenient to think that it is finite dimensional, but this is not necessary) a normed space with smooth strictly convex unit ball. Then for every vector  $e\in X$, $\|e\|=1$,  there exists a special functional, call it $f_e$, that satisfies : 1) $f_e(e)= 1$, 2) $\|f_e\|=1$.

Geometrically we can think about $f_e$ as a vector $n_e$  orthogonal to the unit sphere of $X$ at point $e$, and normalized to have 
$$
\max_{e'\in S} (n_e, e')=1\,,
$$
where $(\cdot, \cdot)$ is the usual Hilbert duality.

Given $e$ and linear subspace $\cL$ (let $\dim \cL<\infty$), we  define the metric projection of $e$ onto $\cL$ as a vector $x\in \cL$ such that
$$
\dist (e, \cL) =\|e-x\|\,.
$$
In spaces with smooth strictly convex balls, this $x$ is unique, call it $\cP_\cL e$.

Suppose we are given $e, v\in X$,  $\|e\|=1\|,  \|v\|=1$, let $\cL_v$ be the one dimensional linear space spanned by $v$. Then the metric projection of $e$ onto $\cL_v$ can be written by formula:
\begin{equation}
\label{proj}
\cP_{\cL_v} e = f_e(v) v\,.
\end{equation}
In particular, (remember that $\|e\|=1$)
\begin{equation}
\label{ker}
v\in \Ker(f_e) \quad\text{iff}\quad \dist (e, \cL_v)=1\,.
\end{equation}

In particular, unit vector $e$ is orthogonal to $\Ker (f_e)$ (but not the vice versa).
\begin{remark}
It is important to not use formula $f_v(e) v$ for $\cP_{\cL_v} e$.
\end{remark}

\section{About question 1}
\label{q1}

Everything is real valued below.
Let $X$  be a smooth with strictly convex unit sphere Banach space, like $L^p$, $p\neq 1, \infty$. Actually we will construct counterexample in any $L^p(0,1)$, such that
$p\neq 1, \infty$ and $p\neq 2$. But below we consider only the case of $p=3$. 

Now $E_n$ will be a space of polynomials of degree $\le n-1$ in $L^p(0,1)$. Consider the unit sphere in $E_n$. Enumerate polynomials in the unit sphere of $E_n$ by some index set: $\{P_i\}_{i\in I}$.
For every polynomial $\{P_i\}_{i\in I}$ choose the functional of norm $1$, in other words, a function $f_i$ in $L^q(0,1)$, $q=\frac{p}{p-1}$ of norm $1$ such that $f_i(P_i):= \int_0^1 P_i(t) f_i(t) dt=1$. It is easy to write down such a unique $f_i$:
$$
f_i(t) := |P_i(t)|^{p-1} \sign P_i(t)\,.
$$
Here is a key observation: the dimension of the linear span $\cL\{f_i\}_{i\in I}$ is huge, actually it is in fact $>>n$ in all the cases except for the case $p=2$, where $f_i =P_i$ and dimension of the span is $n$.

\begin{lemma}
\label{codim}
Let $p=3$. Let $n$ be odd. Then $\dim\cL\{f_i\}_{i\in I}\ge n+\frac{n}2-\frac12$.
\end{lemma}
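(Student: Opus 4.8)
The plan is to write down an explicit list of $n+\frac{n-1}{2}$ linearly independent monomials that are forced to lie in $\cL\{f_i\}_{i\in I}$. The starting observation is that $\cL\{f_i\}_{i\in I}$ is a linear subspace of $L^{3/2}(0,1)$ and that the index set $I$ runs over \emph{all} unit vectors of $E_n$; hence for every nonzero polynomial $P$ with $\deg P\le n-1$ the function $f_P=\abs{P}^{2}\sign P$ lies in $\cL\{f_i\}_{i\in I}$, up to the positive scalar $\|P\|_3^{2}$ coming from the normalization $P\mapsto P/\|P\|_3$ (a positive scalar is irrelevant for span membership). The decisive simplification is that if $P>0$ on $(0,1)$, then $\sign P\equiv1$ there, so $f_P=P^{2}$ is again a genuine polynomial; it therefore suffices to understand $\operatorname{span}\{P^{2}:P\in E_n,\ P>0\text{ on }(0,1)\}$.

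First I would feed in the monomials: for $0\le j\le n-1$ the polynomial $t^{j}$ is positive on $(0,1)$, so $f_{t^{j}}$ is proportional to $t^{2j}$, and hence $1,t^{2},t^{4},\dots,t^{2n-2}$ all belong to $\cL\{f_i\}_{i\in I}$; in particular so does $1$. Next I would produce the remaining low-degree monomials $t,t^{2},\dots,t^{n-1}$ by polarization: for $1\le j\le n-1$ the polynomial $1+t^{j}$ is positive on $(0,1)$, so $(1+t^{j})^{2}=1+2t^{j}+t^{2j}$ lies in $\cL\{f_i\}_{i\in I}$, and subtracting from it the vectors $1$ and $t^{2j}$, already known to be there, leaves $t^{j}\in\cL\{f_i\}_{i\in I}$. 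Thus $1,t,t^{2},\dots,t^{n-1}$ and $1,t^{2},t^{4},\dots,t^{2n-2}$ all lie in the space.

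It remains to count exponents. The set $\{0,1,\dots,n-1\}\cup\{0,2,4,\dots,2n-2\}$ consists of $0,1,\dots,n-1$ together with the even integers lying in $[n,2n-2]$; since $n$ is odd these are exactly $n+1,n+3,\dots,2n-2$, a further $\frac{n-1}{2}$ values, none of them in the first block. As distinct monomials are linearly independent, this gives $\dim\cL\{f_i\}_{i\in I}\ge n+\frac{n-1}{2}=n+\frac{n}{2}-\frac{1}{2}$, which is the claim.

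I do not expect a genuine obstacle: the proof is a short explicit construction, and the only points deserving a line of care are that passing from $P$ to $P/\|P\|_3$ merely rescales $f_P$ by a positive constant, and the elementary parity bookkeeping in the last step (for even $n$ the same scheme produces $n+\frac{n}{2}$ monomials). It may be worth remarking that the estimate is far from sharp: evaluating $(1+\eps S)^{2}=1+2\eps S+\eps^{2}S^{2}$ at two suitable values of $\eps$ shows $S^{2}\in\cL\{f_i\}_{i\in I}$ for every $S\in E_n$, so by polarization the span contains every polynomial of degree $\le 2n-2$; moreover the non-smooth functions $(t-c)\abs{t-c}$, $c\in(0,1)$, are linearly independent and lie in the span, so for $n\ge2$ it is in fact infinite-dimensional. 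The clean finite estimate above is, however, all that is needed below.
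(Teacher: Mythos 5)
Your proof is correct, and it reaches the same count of $n+\frac{n-1}{2}$ monomial exponents that the paper uses, but the route to get there is genuinely different and, I would say, cleaner. The paper argues by contradiction through a finiteness hypothesis: it picks a basis $\{f_{i_j}\}_{j=1}^d$ of $\cL\{f_i\}_{i\in I}$, restricts to a subinterval $J\subset(0,1)$ avoiding the zeros of the $P_{i_j}$ and of an arbitrary unit $P$, observes that on $J$ each $f$ is $\pm(\text{polynomial})^2$, and then promotes the resulting identity $\sum_j A_j P_{i_j}^2\equiv P^2$ on $J$ to all of $(0,1)$ by polynomial rigidity, concluding $\dim\operatorname{span}\{P^2\colon P\in E_n\}\le d$. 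Your version avoids the subinterval/continuation step entirely by the simple observation that if $P>0$ on $(0,1)$ then $f_P=P^2$ \emph{globally}, so the inclusion $\operatorname{span}\{P^2\colon P>0\text{ on }(0,1)\}\subset\cL\{f_i\}_{i\in I}$ holds outright; the monomials $t^j$ and $1+t^j$ are all positive there, and the rest is the same parity bookkeeping. (The two monomial lists differ — the paper uses the even powers together with $(1+x)^2,(1+x^3)^2,\dots,(1+x^{n-2})^2$, you use $\{0,\dots,n-1\}\cup\{0,2,\dots,2n-2\}$ — but the cardinality is the same $n+\frac{n-1}{2}$.) Your approach buys two things: it sidesteps the slightly delicate sign-absorption and continuation argument, and it makes the non-sharpness of the bound transparent, as your closing remark — deriving $S^2\in\cL\{f_i\}$ for all $S\in E_n$ from $(1\pm\eps S)^2$, hence $\dim\ge 2n-1$ by polarization, and even infinite-dimensionality from $(t-c)\lvert t-c\rvert$ — shows. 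The paper's formulation, on the other hand, is the one that survives when one no longer has a ready supply of positive representatives, but that generality is not needed here.
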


\begin{proof}
If $\dim\cL\{f_i\}_{i\in I}=\infty$ we are done. 
Otherwise we are in a finite dimensional situation and let $\{f_{i_j}\}, j=1, \dots, d,$ be a smallest linearly independent collection.
Let $n-1=2k$ with odd $k$ for example.
Let $J\subset (0, 1)$ be a segment without zeros of $P_{i_j}$. Choose $P\in E_n$ of norm $1$ and we can diminish $J$ not to have a zero of $P$ on it. Now we can write
$$
\sum_{j=1}^d A_j \cdot(P_{i_j})^2(t)\equiv P^2(t), \quad t\in J
$$
with some constants $A_j$ (the signs on $J$ are absorbed into $A_j$). Then the same  is true on $(0,1)$. 
Thus  the dimension of the linear span of all $P^2$, $\deg P\le n-1$, is at most $d$. But $1, x^2, x^4, \dots, x^{2(n-1)}$ and $(1+x)^2, (1+x^3)^2,...,
(1+ x^k)^2$ are all linearly independent. So $d\ge n+\frac{n}2-\frac12$.
\end{proof}

Now let us use that this dimension is  large. Consider hyperplanes in $X=L^3$, $H_i:= Ker f_i$. Consider
$$
H:= \cap_{i\in I} H_i,
$$
it is a linear subspace in $X=L^3(0,1)$. By the previous lemma
\begin{equation}
\label{codim}
\text{codim} H \ge d = n +\frac{n}2 -\frac12\,.
\end{equation}

Consider the linear subspace $Y_n$ generated by $E_n$ and $H$. Now we are ready to choose $F_n$.  Notice that by construction $H\cap E_n =\{0\}$. Let us choose a subspace  $F_n$ of $Y_n$, $\dim F_n =d$, that intersects $E_n$ only in $\{0\}$ and intersects $H$ only in $\{0\}$. This is of course possible by \eqref{codim}.  This is our $F_n$.

Suppose there is a non-trivial subspace $K$ of $F_n$ such that $E_n$ is orthogonal to $K$. Then for any unit vector $v$ in $K$ we have
\begin{equation}
\label{dist1}
inf_\la \dist ( e-\la v) = 1, \quad \forall e\in E_n, \|e\|=1\,.
\end{equation}

Fix $v\in  K$.
Then the functional on the span of $e, v$, given by $\tilde f_e(\al e+\beta  v) =\al$, has three properties: 1) on this span $\|\tilde f_e\|=1$, 2) $\tilde f_e(e)=1$, 3) $\tilde f_e(v)=0$. The first property follows immediately from \eqref{dist1}. Extend those functionals
to $f_e$ on $L^3(0,1)$ with the same norm.  Consider $e=e_i, i\in I$. Then the intersection of kernels of $f_{e_i}$ is $H$ and  by construction $H\cap F_n=\{0\}$. On the other hand,
$$
f_{e_i}(v) =0, \quad \forall i\in I\,.
$$
So $v\in H$, but by construction $v\in F_n$. Contradiction. So $K$ is trivial.

We have shown that for the constructed  pair $(E_n, F_n)$ there is no non-trivial $K$, which is subspace of $F_n$, such that $E_n$ is orthogonal to $K$.

\bigskip

Suppose that for every $\eps_m=1/m$ there is a unit vector $v_m\in F_n$ such that $E_n$ is $1/m$-almost orthogonal to $\cL_{v_m}$, the span of one vector $v_m$. Choose $v$ such that $\|v-v_{m_k}\|\to 0$. For any unit vector $e\in E_n$, there is $\la_k$ such that
$$
\|e-\la_k v_{m_k}\|\ge 1-\frac1{m_k}
$$
and $\la_k$ is such that $|\la_k|\le 2$. Without loss of generality we may think that $\{\la_k\}$ converges. Then 
$$
\inf_\la\|e -\la v\|=1\,.
$$
This means that $E_n$ is orthogonal to $\cL_{v}$, but we proved that this is not the case.

We conclude that there exists $\eps_n$ in our example such that there is no nontrivial subspace $K$ in $F_n$ such that $E_n$ is $\eps_n$-almost orthogonal to $K$.

\section{About question 2}
\label{q2}

It tuns out that the following result appeared in \cite{KKM} as Theorem 2 of this paper. The main ingredient is \cite{LS} and its Lusternik--Schnirelmann theorem also known as Borsuk--Ulam theorem.

\begin{theorem}
\label{borsuk}
Let $E, F$ two subspaces  of a finite dimensional normed space $Y$ with smooth norm, $\dim E < \dim F$. Then there exists unit vector in $F$ such that
the space $\cL(v)$ spanned by it is orthogonal to $E$, in other words $\dist(v, E)=1$.
\end{theorem}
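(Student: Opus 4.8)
The plan is to translate the geometric conclusion into a statement about supporting functionals and then invoke the Borsuk--Ulam theorem. Set $m=\dim E$ and $n=\dim F$, so $n\ge m+1$. Since $0\in E$ and $\|v\|=1$, one automatically has $\dist(v,E)\le 1$, so it suffices to find a unit vector $v\in F$ with $\dist(v,E)\ge 1$. The first step is the observation, in the spirit of \eqref{ker}, that for a unit vector $v$ the equality $\dist(v,E)=1$ holds if and only if the (unique, by smoothness) norm-one supporting functional $f_v$ vanishes on $E$: if $f_v|_E=0$ then $\|v-y\|\ge f_v(v-y)=1$ for all $y\in E$, while conversely Hahn--Banach produces a norm-one functional $\phi$ with $\phi|_E=0$ and $\phi(v)=\dist(v,E)=1$, which is then a supporting functional at $v$, hence $\phi=f_v$ by smoothness.

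Next I would fix a basis $e_1,\dots,e_m$ of $E$ and define, on the unit sphere $S_F$ of $F$, the map $\Phi(v)=(f_v(e_1),\dots,f_v(e_m))\in\bR^m$. Smoothness of the norm makes $v\mapsto f_v$ continuous, so $\Phi$ is continuous; and since the supporting functional at $-v$ is $-f_v$, the map $\Phi$ is odd. By the previous step, a zero of $\Phi$ is precisely a unit vector of $F$ spanning a line orthogonal to $E$. To find one, choose an $(m+1)$-dimensional subspace $F_0\subseteq F$ (possible because $n\ge m+1$); radial projection identifies the unit sphere of $F_0$ with the Euclidean sphere $S^m$ by an odd homeomorphism, so $\Phi$ restricted there becomes an odd continuous map $S^m\to\bR^m$, which must vanish somewhere by Borsuk--Ulam (equivalently, by the Lusternik--Schnirelmann covering theorem). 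The resulting $v$ is the desired unit vector.

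The only points requiring care are that $v\mapsto f_v$ is genuinely continuous and odd --- both consequences of the smoothness hypothesis --- and that the unit sphere of a finite-dimensional normed space is, via radial projection, honestly homeomorphic to a Euclidean sphere by an odd map, so that Borsuk--Ulam applies verbatim; the rest is bookkeeping. I would also flag here what the argument does \emph{not} give: since $\Phi$ is a map into $\bR^m$ defined on all of $S^{n-1}$ with $n-1$ possibly much larger than $m$, one might hope its zero set contains the unit sphere of an $(n-m)$-dimensional subspace of $F$, yielding a subspace $K$ orthogonal to $E$; controlling the zero set to this extent is exactly what fails, and the counterexamples in the following sections show it genuinely can fail.
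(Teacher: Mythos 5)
Your argument is correct and follows essentially the same route as the paper's (which gives two proofs of this theorem, both reducing to Borsuk--Ulam): it is closest to the paper's second proof via supporting functionals $f_v$, with the topological step phrased as ``an odd map $S_F\to\bR^m$ must vanish'' rather than ``no odd map $S_F\to S_E$ can exist,'' two equivalent forms of the same theorem. One genuine improvement is worth flagging. The paper's first proof asserts uniqueness of the metric projection of $v$ onto $E$, and its second asserts uniqueness of the maximizer $e_v$; both facts rest on \emph{strict convexity}, not on the stated hypothesis of a smooth norm (a smooth but non--strictly convex norm has non-unique nearest points and non-unique maximizers). Your formulation sidesteps this entirely: uniqueness, oddness and continuity of $v\mapsto f_v$ really are consequences of smoothness alone, and you deduce $\dist(v,E)=1$ directly from $f_v|_E=0$ by the Hahn--Banach/duality computation, with no appeal to a best approximant. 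So your proof works verbatim under the hypothesis as stated, whereas the paper's as written implicitly also uses strict convexity (harmless in the end because of the approximation argument that follows, but a real gap if one reads the theorem literally). The final remark about why the zero set of $\Phi$ need not contain a sphere of higher dimension is exactly the right thing to flag and is the point of the later counterexamples.
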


\begin{proof}
As the norm of $Y$ is smooth, for any unit vector $v$, there exists unique vector $e\in E$ that minimizes $\|v-e\|$. Thus we get the mapping $e= \xi(v)$ from the unit sphere $S_F$ of $F$ into $E$. Obviously $\xi(-v)=-\xi(v)$. Also it is clear that $\xi: S_F\to E$ is continuous. Also $\dim S_F\ge \dim E$. By Borsuk--Ulam lemma, see \cite{R}, Chapter 12, there exists the pre-image of $0$: $\exists v\in S_F$ such that 
$\xi(v)=0$. This means that $v$ is orthogonal to $E$.
\end{proof}

Let now $E, F, Y$ are as before, but the norm in $Y$ is not smooth. Then approximate the unit ball $B_Y$ extremely well by the 
smooth strictly convex symmetric body $\tilde B_Y$. Construct $v$ from Theorem \ref{borsuk} in this new metric. Then obviously 
$v$ is $\eps$-orthogonal to $E$ in original metric with $\eps$ as small s we wish and depends only on how well we approximated $B_Y$ by $\tilde B_Y$.

\medskip

{\it The second proof of Theorem \ref{borsuk}}. In $F$ we wish to find  a unit vector $v$ such that it is orthogonal to $E$. This means that $\cP_E(v) ={0}$, which means that for any $e\in E, \|e\|=1$ we have $\cP_{\cL_e} v={0}$. By \eqref{proj} it is the same as to write
$$
f_v(e) e={0}\quad \forall e\in E, \|e\|=1,
$$
But this is the same as to write
\begin{equation}
\label{EKer-v}
E\subset \Ker f_v\,.
\end{equation}
Suppose now that for every $v\in F, \|v\|=1,$ we do {\it not} have \eqref{EKer-v}.  Then consider the set of real numbers $J_v:=\{ f_v(e), \|e\|=1,  e\in E\}$. This set is {\it not} consisting of one point $\{0\}$ because we do {\it not} have \eqref{EKer-v}. It is compact.

Hence we can find $m(v)=\max\{x: x\in J_v\}$.  It is given by some $e_v$, $m(v) = f_v(e_v)$. Notice that for every $v$, vector $e_v$ is unique. In fact, if we have $e_v^1, e_v^2$, then normalized $\frac{e_v^1+e_v^2}{2}$ will exceed the maximum.

We obtained a map $m: S_F\to S_E$, from the unit sphere of $F$ into the unit sphere of $E$. It is very easy to see that 
$$
m(-v) = -m(v),
$$
and that $m$ is continuous. Borsuk--Ulam lemma says that this is impossible.
Hence, there is a unit vector $v\in F$ orthogonal to the whole $E$.

\begin{remark}
One could try to use the metric projections of $e\in E$ onto $\cL_v$, $\cP_{\cL_v}$, but $\cP_{\cL_v} e = f_e(v) v$, and it depends very much on $e$ when $v$ is fixed, and uniqueness of the optimal $e$ is elusive.
\end{remark}

\section{About question 2 with dimension higher than $1$. A counterexample}
\label{dimTwo}


Consider unit sphere in $\ell^3_N$
$$
S:= \{x:\sum_{i=1}^N |x_i|^3 =1\},
$$
consider its part $S_+:= S\cap \{x_i> 0, i=1, \dots, N\}$.
Let $\cL$ be a collection of dimension two subspaces in $\bR^N$. We wish to show that only 
a small number of them has the following property. We will call $L\in \cL$ having the property listed below ``bad'' subspaces. All others $L\in \cL$ will be called ``good''.


\medskip

Badness means the following: consider the intersection of $L\cap S$, it is a curve on $S$, and consider all unit normals
to $S$ at points of $L\cap S$. Span a linear subspace by all those normals (by the way the normals themselves represent a one-dimensional smooth submanifold of the round sphere $\bS^{N-1}$). Call this span $N(S, L)$, and if $\dim N(S, L)<3$ (then it is clear that $\dim N(S, L)=2$), we call $L$ bad. Otherwise if
\begin{equation}
\label{good}
\dim N(S, L) \ge 3\,,
\end{equation}
we call  $L$ good.

\bigskip



Space $\cL$ is a real Grassmannian $G(2, N)$. Denote the set of  bad $L$'s by $\cL_b$. It is very easy to see that it is a closed set. 

By dimension below we understand the Hausdorff dimension.
In the following, it should be clear from the context when $\dim X$ means the dimension of a vector space $X$ or the Hausdorff dimension of a subset  of a Grassmannian.

In the next lemma,  the dimensions means  the Hausdorff dimension.

\subsection{A bit of geometry}
\label{abit}
\begin{lemma}
\label{bad}
For any bad $L$ there is a good one, arbitrarily close to it, moreover, $\dim \cL_b< \dim \cL$.
\end{lemma}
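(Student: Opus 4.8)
\emph{Proof sketch.} The plan is to reduce badness of $L$ to a Zariski-closed condition on $L\in\cL=G(2,N)$ that cuts out a proper subvariety; both assertions of the lemma then follow. Write $S=\{x:\Phi(x)=1\}$ with $\Phi(x)=\sum_i|x_i|^3$, and set $n(x):=(x_i|x_i|)_i$, so that $\nabla\Phi(x)=3\,n(x)\neq0$ for $x\neq0$; hence the unit normal to $S$ at $x\in S$ spans the line $\bR\,n(x)$. Since $n(\lambda x)=\lambda^2 n(x)$ for $\lambda>0$, for every $x\in L\setminus\{0\}$ the normal line to $S$ at $x/\|x\|_3\in L\cap S$ is $\bR\,n(x)$, and therefore $N(S,L)=\spn\{n(x):x\in L\}$ --- a purely linear object attached to $L$.

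The key step is the lower bound: \emph{if $L$ is contained in no coordinate hyperplane $H_i=\{x_i=0\}$, then for any basis $u,v$ of $L$, with $a=(u_i^2)_i$, $b=(u_iv_i)_i$, $c=(v_i^2)_i\in\bR^N$, one has $\dim N(S,L)\ge\dim\spn\{a,b,c\}$.} This is the place where the non-smoothness of $\ell^3_N$ has to be handled, and I would handle it by localizing at a point with no vanishing coordinate. Since $L\not\subseteq\bigcup_iH_i$, pick $x_0\in L$ with all coordinates nonzero; in a neighborhood of $x_0$ the sign of each coordinate is constant, so, writing points of $L$ as $su+tv$, one gets $n(su+tv)=D\bigl(s^2a+2st\,b+t^2c\bigr)$ for a fixed diagonal sign matrix $D$ and all $(s,t)$ in some open set $U$. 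As $(s^2,2st,t^2)$ spans $\bR^3$ when $(s,t)$ runs over $U$ (a polynomial vanishing on an open set is identically zero), the span of these normals equals $D\,\spn\{a,b,c\}$; being a subset of the subspace $N(S,L)$, it gives the claimed inequality.

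Finally, a dimension count. Under a change of basis of $L$ by $M\in\mathrm{GL}_2$, the triple $(a,b,c)$ transforms by $\mathrm{Sym}^2 M\in\mathrm{GL}_3$, so $\dim\spn\{a,b,c\}$ depends only on $L$; let $Z:=\{L\in\cL:\dim\spn\{a_L,b_L,c_L\}\le2\}$, the common zero locus of the $3\times3$ minors of the matrix with columns $a_L,b_L,c_L$. This is a Zariski-closed subset of $\cL$, and for $N\ge3$ it is proper: for $u=e_1+2e_2+3e_3$, $v=e_1+e_2+e_3$ the $\{1,2,3\}$-minor is a nonzero Vandermonde determinant. By the key step, every bad $L$ lies in $Z\cup\bigcup_{i=1}^N\{L\in\cL:L\subseteq H_i\}$ --- indeed, if $L$ avoids all $H_i$ and $L\notin Z$, then $\dim N(S,L)\ge3$, i.e.\ $L$ is good --- so $\cL_b\subseteq Z\cup\bigcup_i\{L\subseteq H_i\}$. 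Now $\cL=G(2,N)$ is an irreducible variety of dimension $2(N-2)$, so the proper closed subset $Z$ has dimension $\le2(N-2)-1$, and each $\{L\subseteq H_i\}\cong G(2,N-1)$ has dimension $2(N-3)$; hence $\dim\cL_b<\dim\cL$. Moreover $Z\cup\bigcup_i\{L\subseteq H_i\}$ is a finite union of proper closed subvarieties, so it is nowhere dense and its complement is dense; by the key step that complement consists of good planes, so arbitrarily close to any $L$ there is a good one. The only genuine obstacle is the localized computation in the second paragraph; the rest is routine.
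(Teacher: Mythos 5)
Your proof is correct and takes a genuinely cleaner route than the paper's. The paper parametrizes $L$ as $\ker C$ for an $(N-2)\times N$ matrix $C$, eliminates $x_3,\dots,x_N$, introduces a second full-rank matrix $D$ annihilating the normals, and derives the conditions $\gamma_{k1}\gamma_{k2}=0$ by extracting the $x_1x_2$ cross-term; this requires a separate discussion of where the invertible minor of $D$ sits and a change of basis to reduce to that case, plus an explicit identification of a neighborhood in $G(2,N)$ with a matrix space via a commutative diagram. You instead parametrize $L$ by a basis $u,v$, observe that (away from coordinate hyperplanes) the normals span $D\,\mathrm{span}\{a,b,c\}$ with $a=(u_i^2)$, $b=(u_iv_i)$, $c=(v_i^2)$, and note that badness forces $\dim\mathrm{span}\{a,b,c\}\le 2$, which is cut out by $3\times3$ minors --- a single Zariski-closed condition that you verify is proper via a Vandermonde. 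This replaces the paper's cross-term/minor-position bookkeeping and diffeomorphism construction with one irreducibility argument ($\cL=G(2,N)$ irreducible, so a proper closed subvariety has strictly smaller dimension), and it handles signs by a fixed diagonal matrix and by isolating the finitely many Schubert cells $\{L\subseteq H_i\}$, rather than by restricting to the positive octant. Both proofs are correct; yours is shorter and structurally more transparent, while the paper's keeps everything in the matrix-coefficient language that it reuses in the later $(m,k)$ and $\ell^5_N$ variants.
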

\begin{proof}
Fix a bad $L$ and assume WLOG that $L$ intersects $S_+$. Let $L$ be given by
$$
Cx=0
$$
with matrix $C=\{c_{ij}\}_{i=1,\dots, N-2; j=1, \dots, N}$, whose rank is $N-2$.

Then $N(S, L)$ contains all vectors $e_2(x):= (x_1^2,\dots, x_N^2)$ for   every $x, x_i>0$, such that $\sum_{i=1}^N x_i^3=1 $ and $Cx=0$.
In fact, if a surface is given by $f(x)=1$, then the normal to this surface (not normalized) is given by vector $(f_{x_1}(x), \dots, f_{x_N}(x))$.



Without loss of generality let the minor $\hat C=\{c_{ij}\}_{i=1,\dots, N-2; j=3, \dots, N}$ has non-zero determinant. 
Then we express $x_3,\dots, x_N$ via  linear form of $(x_1, x_2)$ with coefficients depending
on $\{c_{ij}\}$.

We solve
$$
\hat C \hat x = x_1 c_{*, 1} + x_2 c_{*, 2},
$$
where $c_{*, 1} , c_{*, 2}$ are two first columns of $C$, $\hat x:=(x_3, \dots, x_N)$. We get the solution
$$
x_3= \gamma_{31} x_1 +\gamma_{32} x_2,\dots,  x_N= \gamma_{N1} x_1 +\gamma_{N2} x_2,
$$
where $\gamma_{k\ell}$ are real analytic functions of $c_{ij}$.

After these plug-ins, the vector
$e_2(x)$
becomes 
$$
e_2(x_1, x_2) = (x_1^2, x_2^2, Q_3(x_1, x_2),\dots, Q_N(x_1, x_2))\,,
$$
where $Q_k(x_1, x_2)$ are homogeneous polynomials of degree $2$:
$$
Q_k(x_1, x_2) = (\gamma_{k1} x_1 +\gamma_{k2} x_2)^2\quad k=3,\dots, N\,.
$$

Can such a collection $\{e_2(x_1, x_2)\}$, where point $(x_1, x_2)$ runs over $\Gamma$, spans only $2$ dimensional space, where $\Gamma=\{(x_1, x_2): P(x_1, x_2)=1\}$, and where $P$ is a (homogeneous) polynomial of degree $3$?  Here
$$
P(x_1,x_2) = x_1^3+x_2^3+ \sum_{k=3}^n(\gamma_{k1} x_1 +\gamma_{k2} x_2)^3\,.
$$

If collection $\{e_2(x_1, x_2)\}_{(x_1, x_2) \in \Gamma}\}$ spans only $2$ dimensional space, then there exists matrix
$D=\{d_{ij}\}_{i=1,\dots, N-2; j=1, \dots, N}$, whose rank is $N-2$, such that
$$
De_2(x_1, x_2) =0, \quad \forall (x_1, x_2): P(x_1, x_2)=1\,.
$$
This is possible only if 
\begin{equation}
\label{equiv}
De_2(x_1, x_2) \equiv 0\quad  \forall (x_1, x_2).
\end{equation}

Explanation: take a row of $D$, $d=(d_1, d_2, \dots, d_N)$, let $Q(x_1, x_2) :=
d_1x_1^2+ d_2 x_2^2+ d_3Q_3(x_1, x_2)+\dots+ d_n Q_N(x_1, x_2)$.  From the displays above we get that as soon as $P(x_1, x_2)=1$ we have $Q(x_1, x_2)=0$, where $Q$ is a homogeneous polynomial of degree $2$. This is of course impossible, unless $Q\equiv 0$. In fact, genuine curve $P=1$ on the plane cannot sit inside one straight line or two straight lines, when $P$ is a homogeneous polynomial of degree $3$.
In fact, suppose that polynomial $P$ contains both $x_1$ and $x_2$ and $P(x_1, x_2)=1\Rightarrow x_2=kx_1$ with constant $k$. Then we get $p(k) x_1^3=1$, where $p$ is a polynomial. The set of solutions of such equation cannot be contained in neither one nor two lines passing through the origin. If $P$ suddenly is  just $c x_1^3$ or $c x_2^3$, then again it is obvious that the set of solutions of $P=1$ cannot be contained in neither one nor two lines passing through the origin.

So we start with \eqref{equiv}. Assume now that the invertible square $(N-2)\times (N-2)$ minor $\hat  D$ of $D$ is the last $N-2$ rows of $D$ (it is really WLOG, as the reader will see from what follows). Notice that in the left hand side of \eqref{equiv} there is no term $x_1\cdot x_2$. This is important for what will now follow.

After opening  the brackets in quadratic forms involved in \eqref{equiv}, and collecting the terms in front of $x_1\cdot x_2$ only, equation \eqref{equiv} gives us  the following:
$$
\sum_{k=3}^N d_{k\ell}\cdot ( \gamma_{k1}\cdot \gamma_{k2}) = 0\,.
$$
As $\hat D$ is invertible, we conclude that
\begin{equation}
\label{0}
\gamma_{k1}\cdot \gamma_{k2}=0\quad\forall k=3, \dots, N.
\end{equation}

So as soon as $2$-dimensional subspace $L$ (parametrized by $C$ above) has $D$ as above (i.e. the space spanned by normals to $N(S, L)$ is  $2$-dimensional)  we immediately have \eqref{0}.  This means that $\gamma_{k1}=0, k\in A$, $\gamma_{k2}=0, k\in B$, and $A\cup B=\{3,\dots, N\}$. These are real analytic conditions of elements of matrix $C$, so those $\gamma's$ are either identically zero, or happen to be zero on positive co-dimension of $G(2, N)$.

Let us explain this. Fix an $(N-2)\times N$ matrix $C$ with rank $N-2$. WLOG we can think that the last $N-2$ columns form an invertible matrix $\hat C$.
Consider a small neighborhood of $C$, call its elements $B$. All $B$'s have the same property as $C$. Call this small neighborhood $M(C)$. We recall that
$G(2, N)$ can be identified with $M_{(N-2)\times N}/ Inv_{(N-2)\times(N-2)}$, where $Inv$ stands for invertible matrices. Let $g$ be an element of $G(2, N)$ that corresponds to $C$ under the identification mentioned above. Let $\pi$ be a canonical projection from $M(C)$ to $N(g)$, where $N(g)$ is a neighborhood of $g$ in $G(2, N)$.  Let us define map $f: M(C)\to M_{(N-2)\times 2}$,
$$
f(B) = \text{first two columns of} \, (\hat B^{-1} B)\,.
$$

We have three spaces $M(C), M_{(N-2)\times 2}, N(g)$, and two maps: $f: M(C)\to M_{(N-2)\times 2}$, $\pi:M(C)\to N(g)$.   Maps are real analytic with their images being of dimension $2(N-2)$ by construction.  Notice that we can close the commutative diagram by the third map
$$
F: N(g) \to M_{(N-2)\times 2}\,.
$$
Let us explain this: for a point $g'\in N(g)$ we have its $\pi^{-1}(g')$ consisting of certain matrices of the  form $A^{-1}B$. But by construction of $f$ it is obvious that $f(B)= f(A^{-1}B)$.
So we can push the map $f$ down to the map $F: N(g)\to M_{(N-2)\times 2}$ in such a way that
$$
\pi\circ F=  f\,.
$$
Then clearly $F:N(g) \to M_{(N-2)\times 2}$ is a real analytic diffeomorphism. 

Notice that matrix consisting of two columns $(\gamma_{*, 1}(C), \gamma_{*,2}(C)) $ is exactly    $f(C)= F(g)$. In the space $ M_{(N-2)\times 2}$ consider the sub-variety $M_0$ of matrices such that $a_{k1}\cdot a_{k2}=0, k=1, \dots, N-2$.  Then 1) its dimension $M_0$ is $N-2< \dim M_{(N-2)\times 2}= 2(N-2)$, 2) 
$F(g) = f(C)= (\gamma_{*, 1}(C), \gamma_{*,2}(C))$ belongs to $M_0$ by \eqref{0}. 

Hence, $g\in F^{-1}(M_0)$, and we are on a proper sub-variety of $G(2, N)$. We proved that $\cL_b$ locally lies on proper sub-variety of $G(2, N)$.

Hence, we conclude that just moving matrix $C$ very slightly we get that the corresponding $D$ cannot exist.

Moreover,  we proved that
\begin{equation}
\label{dim}
\dim \cL_b < \dim \cL\,.
\end{equation}

\bigskip

We are left to understand why it is enough to consider the case when the invertible square $(N-2)\times (N-2)$ minor $\hat  D$ of $D$ is the last $N-2$ rows of $D$.
Notice that we can think that two first columns $c_{*,1}, c_{*,2}$ of matrix $C$ are linearly independent. We fixed $C$ having last $N-2$ columns forming  full rank $N-2$.  If only matrices $C$ with linearly dependent  first and second columns  give the element of $\cL_b$, then $\dim \cL_b<\dim \cL$ automatically.

So we consider the case now when first two columns $c_{*,1}, c_{*,2}$ of matrix $C$ are linearly independent. Then columns $\gamma_{*,1}, \gamma_{*,2}$  are linearly independent, indeed $\gamma_{*, i} =\hat C^{-1} c_{*, i}$, $i=1,2$.

\medskip

Coming back to \eqref{equiv}: $De_2(x_1, x_2) \equiv 0\,\, \forall (x_1, x_2)$, we can now make a linear change of variable:  $\gamma_{*, i} \to \tilde\gamma_{*, i}$, $i=1,2$, in such a way that
$$
\tilde\gamma_{N-1, 1} =1, \tilde\gamma_{N, 1} =0; \quad \tilde\gamma_{N-1, 2} =0, \tilde\gamma_{N, 2} =1\,.
$$
Then we can rewrite  $De_2(x_1, x_2) \equiv 0\,\, \forall (x_1, x_2)$ as equation with forms $\tilde Q_k:=(\tilde\gamma_{k1} x_1+\tilde\gamma_{k2} x_2)^2$,$k=1,\dots, N-2$, $\vec Q:= \{\tilde Q_k\}_{k=1,\dots, N-2}$, $\vec d_{N-1} := \{d_{k, N-1}\}_{k=1,\dots, N-2}$,  $\vec d_{N} := \{d_{k, N}\}_{k=1,\dots, N-2}$ as follows
\begin{eqnarray}
\label{22}
&\hat D \vec Q =  \vec d_{N-1} (\tilde\gamma_{N-1, 1} x_1 + \tilde\gamma_{N-1, 2}x_2)^2 +  \vec d_{N} (\tilde\gamma_{N, 1} x_1 + \tilde\gamma_{N, 2}x_2)^2=\notag
\\
&= x_1^2 \vec d_{N-1} + x_2^2 \vec d_{N}
\end{eqnarray}
The right hand side does not have a mixed term $x_1\cdot x_2$, and the left hand side has invertible $\hat D$. Hence again we got a relationship akin to \eqref{0}, namely:
\begin{equation}
\label{22concl}
\tilde\gamma_{k1}\cdot \tilde\gamma_{k2}=0,\quad\forall k=1, \dots, N-2.
\end{equation}
As before we conclude \eqref{dim}.
\end{proof}

\bigskip

\begin{remark}
Notice that actually \eqref{equiv} can very well happen. Consider as $S$ a ``cubic-ellipsoid'' in $\bR^3$, $|x_1|^3 + |x_2|^3 + |x_3|^3=1$. Then $(x_1, x_2)$, $(x_1, x_3)$ and $(x_2, x_3)$ sections of the ellipsoid give  us the $2$-dimensional sections $L$ such that normals to $S\cap L$ are two dimensional.
These are the only three sections with this property on this cubic ellipsoid in $3D$.

On the usual quadratic ellipsoid in any dimension all $2$-dimensional sections $L$  will be such that normals to $S\cap L$ are two dimensional. This is because in the latter case vectors $e_2(x), x\in S\cap L,$ consist of linear rather than quadratic forms.
\end{remark}

\bigskip

\subsection{Constructing counterexample by using Lemma \ref{bad}}
\label{cex}

On $\cL_b\subset G(2, N)$ we defined the map
$$
 \vf: \cL_b\to G(2, N) \quad \vf(L)= N(S, L),
$$
which is clearly continuous (if $\xi_n\in S, \,\xi_n\to \xi$, then unit normal vectors to $S$ at $\xi_n$ converge to unit normal vectors to $S$ at $\xi$ by the smoothness of our $S$).

\medskip

On $G(2, N)$ one has a natural metric $\rho$. Let us show that $\vf$ is Lipschitz on $\cL_b$.

 In fact, let $L_1, L_2\in \cL_b$ be very close, say, $\rho(L_1, L_2) = \delta>0$,  
 and $N_1, N_2$ be their images under $\vf$. Consider a point $\xi\in L_1\cap S$, 
 and $v$ is unit normal at $\xi$ to $S$, then all points $\eta\in L_2\cap S$, 
 that lie at distance $C\delta$ have unit normals $u$  such that $\sin(u, v) \le C' \delta$. This is just by the smoothness of $S$.
 
 \medskip

 All vectors $u, v$ lie in their corresponding $2$-dimensional planes $N_2, N_1$. Hence, $\rho(N_1, N_2) \le C'\delta$.
 From Lipschitz property and \eqref{dim} we conclude that
 $$
 \dim \vf(G(2, N))\le \dim \cL_b< \dim G(2, N)\,.
 $$
 Thus
 \begin{equation}
 \label{in}
 \vf(G(2, N))\,\, \text{ is a compact strictly inside}\,\, G(2, N).
 \end{equation}
 
 \bigskip
 
 It is time to construct $E$, $F$. Put $F=\ell^3_N$. To choose $E$ we use \eqref{in}. Choose a point 
 $g$ in $G(2, N)$ such that 
 \begin{equation}
 \label{ch-g}
 g\notin \vf(G(2, N))\,.
 \end{equation}
 This $g$ is a two dimensional space of vectors $n$, and we consider the space $L_g$ of  those $e\in \bR^N$ that
 \begin{equation}
 \label{Lg}
 (e, n)=0, \quad \forall n\in g\Rightarrow  L_g =\cap_{n\in g} \Ker (n,\cdot)\,.
 \end{equation}
 here the duality $(\cdot,\cdot)$ is the usual Hilbert duality (which is also a duality between $\ell^3$ and $\ell^{3/2}$). Those $e$ constitute the subspace $L_g$  of $\ell^3_N$ of co-dimension $2$ we are looking for.
 
 In fact, take any $L$, a two dimensional subspace of $F$. If $L\in \cL_b$, then the support functionals $N(S, L)\neq g$, as $N(S, L)\in \vf(G(2, N))$ and $g\notin \vf(G(2, N))$. This means that functionals of $N(S, L)$ cannot be all annihilating $E$, in other words, we have
 \begin{equation}
 \label{notin1}
 E\not\subset \cap_{u\in L} \Ker f_u= \cap_{n\in N(S, L)} \Ker (n,\cdot)\,.
 \end{equation}
 In fact, \eqref{notin1}immediately follows fro \eqref{Lg}  and the fact that $g\ne N(S,L)$, the latter following from \eqref{ch-g}.
 
 The second case is that $L$,  a two dimensional subspace of $F$, is not in $\cL_b$, that is it is a good subspace. Then 
 $$ 
 \cap_{u\in L} \Ker f_u= \cap_{n\in N(S, L)} \Ker (n, \cdot)
 $$
 is a subspace of co-dimension at least $3$ as $N(S, L)$  spans  at least a $3$-dimensional subspace. But if co-dimension  of $ \cap_{u\in L} \Ker f_u$  is at least $3$ and co-dimension of $E$ in $F$ is exactly $2$, we have again \eqref{notin1}.
 
 \medskip
 
 Thus we constructed a smooth strictly convex space $F$ of dimension $N$, its subspace  $E$ of dimension  $N-2$, such that {\it there is no subspace of dimension} $2$ in $F$ that is orthogonal to $E$. But we know that the subspaces of dimension $1$ orthogonal to $E$ must exist, see Section \ref{q2}.
 
 \bigskip
 
 \begin{remark}
There is a possibility that one can avoid proving \eqref{dim}. It is quite easy to prove that $\cL_b\neq \cL$ for our ``cubic ellipsoid''. We just take $L$ given by
$C$ of a special form $c_{*,1}=(1, \dots, 1), c_{*,2}=(1, \dots, 1)$ and having $\hat C=Id$. It is very easy to prove that for such $L$, $N(S, L)$ generates a space of dimension at least $3$. Thus the map $\vf$ maps $G(2, N)$ to its proper compact subset (and $\vf$ is obviously continuous). But now a bit of topology, for which we are grateful to Michael Shapiro. Real manifold  $G(2, N)$ is a compact manifold that is also a  $CW$ complex. Compact manifold without boundary that is also a $CW$ complex cannot be homeomorphic to its proper part. This fact can be proved using senior homology groups.

If the image $\vf(\cL_b)$ is not the whole $G(2, N)$ we reason exactly as above, and construct $E, L\subset F$, such that $L, \dim L=2,$ is not orthogonal to $E$.

Thus, assume that the image of $\vf$ is the whole $G(2, N)$. It is a continuous image, and it is an easy  (but pretty) exercise to prove that $\vf$ does not glue the points of $\cL_b$. 
Hence $\vf^{-1}$ is a homeomorphism from  $G(2, N)$ to  $\cL_b$. Then $G(2, N)$ is homeomorphic to its proper part, which is a contradiction.
\end{remark}

\section{About subspace $E$ of $F$ of co-dimension $m$ that has no orthogonal space of dimension $k$, $m\ge k$}
\label{32}

Notice that in the previous Section we proved that dimension of bad subspaces $\cL_b$ of co-dimension $2$ in $F$ is at most $N-2$. This compact sits inside $\cL= G(2, N)$ of dimension $2(N-2)$. The dimension of all subspaces of dimension $3$ having inside one of the sub-space from $\cL_b$ is at most
$N-3+ N-2= 2N-5$. Call such objects $\cL_{bb}$. It sits in $G(3, N)$ whose dimension is $3(N-3)$, so if $N>4$ (and we are interested in large $N$) we will always be able to choose $g\in G(3, N)\setminus \cL_{bb}$.

Consider $E_g$ that corresponds to $g$, that is 
$$
E_g=\{e: (n,e)=0\, \forall n\in g\}\,.
$$
  Space $E_g$ a linear subspace of co-dimension exactly $3$ in $F$, and  if $L\in \cL_b$, and it happens that
\begin{equation}
\label{extra-orth}
E_g\subset \cap_{u\in L} \Ker f_u,
\end{equation}
we notice that by construction of $g$, $L\not\subset g$. But then $\dim\text{span}(L, g) \ge 4$.  Thus, by two last display formulas the co-dimension of $E_g$ is $4$, which is a contradiction. Hence \eqref{extra-orth} cannot hold.
So $2$ dimensional subspaces generated by bad $L$ cannot be orthogonal to $E$.

\medskip

Let now $L$ be good. If we would be able to prove  that for $L\in \cL\setminus \cL_b$ (that is for good $L$), we have that
\begin{equation}
\label{4} 
N- \dim\big(\cap_{u\in L} \Ker f_u\big)  = \dim\text{span}\{ f_u: u\in L\} \ge 4,
\end{equation}
then the same relationship 
$$
E_g\not\subset \cap_{u\in L} \Ker f_u,
$$
would obviously hold for good $L$. Then $E_g$ of co-dimension $3$ would be without orthogonal subspaces of dimension $2$. 

\medskip

But how to prove \eqref{4}?
Of course we have \eqref{good} by definition of goodness. But \eqref{4} is a stronger property.
One should imitate \eqref{22}, \eqref{22concl}. But it is not clear how to do this imitation.

\bigskip

In the previous part of this Section $m=3, k=2$. And we do not know whether we can always find $E, F$, $F=\ell^3_N$, $\dim F-\dim E =3$, such that no subspace of $F$ of dimension $2$ is orthogonal to $E$.  But below we will build the required example if $F=\ell^5_N$.

\subsection{Obstacle  of time $(3,3)$}
\label{33s}

It is easy to imitate \eqref{22}, \eqref{22concl}  and to build an example $E, F$, $F=\ell^3_N$,  $\dim F-\dim E =3$, and no subspace of $F$ of dimension $3$ is orthogonal to $E$.  In fact, we call bad and denote $\cL_{b3}$ the elements of $G(3, N)$ that have property $\dim N(S, L) <4$. And we call good those elements of $G(3, N)$ for which \eqref{4} holds.

Then we can repeat the previous Section almost verbatim because in this case we can just repeat the considerations that brought us \eqref{22concl}. In fact we would come to the analog of \eqref{22} but with matrix $\hat D$ being $(N-3)\times (N-3)$ matrix and with 3 terms in the right hand side of \eqref{22}: $\vec d_{N-2} x_1^2+ \vec d_{N-1} x_2^2 + \vec d_N x_3^2$. Again we have no mixed term in this expression, and  the analog of \eqref{22concl}  would follow.
In fact we have proved the following theorem.
\begin{theorem}
\label{mm}
For any $1<m$, we can choose large enough $N$ and $F=\ell^3_N$ such that there exists a subspace $E\subset F$, $\dim F-\dim E = m$ such that $F$ does not have a subspace of dimension $m$ orthogonal to $E$.
\end{theorem}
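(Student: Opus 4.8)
The plan is to re-run the construction of Sections~\ref{dimTwo} and \ref{32} with $G(2,N)$ replaced by $G(m,N)$; as noted right after the statement, the ``no mixed term'' mechanism behind \eqref{22}--\eqref{22concl} survives verbatim once $m>1$. So fix $N$ (we will see $N>m$ is enough) and put $F=\ell^3_N$ with unit sphere $S=\{\sum_i|x_i|^3=1\}$; along $S\cap\{x_i>0\}$ the unnormalized normal at $x$ is $e_2(x)=(x_1^2,\dots,x_N^2)$. For $L\in G(m,N)$ let $N(S,L)$ be the span of the unit normals to $S$ at the points of $L\cap S$. Parametrizing $L$ by $Cx=0$ with the last $N-m$ columns of $C$ invertible, so that $x_{m+1},\dots,x_N$ become linear forms $\sum_\ell\gamma_{k\ell}x_\ell$ in the free variables $x_1,\dots,x_m$, the projection onto the first $m$ coordinates shows $\dim N(S,L)\ge m$ always. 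Call $L$ \emph{bad} if $\dim N(S,L)=m$ and \emph{good} otherwise, and write $\cL_{bm}\subset G(m,N)$ for the (closed) set of bad planes.

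The main step is the inequality $\dim\cL_{bm}<\dim G(m,N)=m(N-m)$, proved exactly as Lemma~\ref{bad}. For a bad $L$ the restriction of $e_2$ to $L$ is $x\mapsto\bigl(x_1^2,\dots,x_m^2,(\sum_\ell\gamma_{m+1,\ell}x_\ell)^2,\dots,(\sum_\ell\gamma_{N,\ell}x_\ell)^2\bigr)$, and badness forces, as in \eqref{equiv}, a rank-$(N-m)$ matrix $D$ with $De_2(x)\equiv0$ on $\bR^m$: each row of $D$ yields a homogeneous quadratic vanishing on $L\cap S$ (the degree-$3$ hypersurface $P=1$ in the free variables), hence on an open cone, hence identically. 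After the change of variables of \eqref{22} -- which normalizes a distinguished $m\times m$ block of $(\gamma_{k\ell})$ to the identity and is WLOG for the reasons given at the end of the proof of Lemma~\ref{bad} -- the matrix identity takes the form $\hat D\,\vec Q=\sum_{p=1}^m x_p^2\,\vec d_p$ with $\hat D$ invertible and no $x_px_q$ term $(p\ne q)$ on the right, so comparing the coefficients of $x_px_q$ gives $\tilde\gamma_{kp}\tilde\gamma_{kq}=0$ for all $k$ and all $p\ne q$, i.e.\ each row of $(\tilde\gamma_{k\ell})$ has at most one nonzero entry. Via the real-analytic Grassmannian-chart diffeomorphism $F\colon N(g)\to M_{(N-m)\times m}$ of Lemma~\ref{bad} this places the local piece of $\cL_{bm}$ inside $F^{-1}(M_0)$, where $M_0$ is now the variety of $(N-m)\times m$ matrices with at most one nonzero entry per row; since $\dim M_0=N-m<m(N-m)$ when $m>1$ and $G(m,N)$ is covered by finitely many such charts, $\dim\cL_{bm}\le N-m<\dim G(m,N)$. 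In particular $G(m,N)\setminus\cL_{bm}\ne\varnothing$, and this (with $N>m$) is the only place largeness of $N$ is needed.

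Now I finish as in Sections~\ref{cex} and \ref{32}. The map $\vf(L)=N(S,L)$ is Lipschitz on $\cL_{bm}$ by smoothness of $S$, so $\vf(\cL_{bm})$ is a compact subset of $G(m,N)$ with $\dim\vf(\cL_{bm})\le\dim\cL_{bm}<\dim G(m,N)$, hence proper. Pick $g\in G(m,N)\setminus\vf(\cL_{bm})$ and let $E=E_g=\bigcap_{n\in g}\Ker(n,\cdot)$, a subspace of $F$ of co-dimension exactly $m$. If $L\subset F$ has dimension $m$, then: if $L$ is good, $\bigcap_{u\in L}\Ker f_u=\bigcap_{n\in N(S,L)}\Ker(n,\cdot)$ has co-dimension $\ge m+1$ and cannot contain $E$; if $L$ is bad, $N(S,L)\in\vf(\cL_{bm})$ forces $N(S,L)\ne g$, and the two distinct co-dimension-$m$ subspaces $\bigcap_{n\in N(S,L)}\Ker(n,\cdot)$ and $E_g$ satisfy $E\not\subset\bigcap_{u\in L}\Ker f_u$. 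Since by \eqref{proj} orthogonality of $L$ to $E$ is precisely the inclusion $E\subset\bigcap_{u\in L}\Ker f_u$, no $m$-dimensional subspace of $F$ is orthogonal to $E$.

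I expect the only genuine obstacle to be the bookkeeping in this $m$-variable version of \eqref{22}: checking that the reduction to ``$\hat D$ sitting in the last $N-m$ columns and a distinguished $m\times m$ block of $(\gamma_{k\ell})$ equal to the identity'' is truly WLOG, in parallel with the closing paragraphs of the proof of Lemma~\ref{bad}. Everything else is a direct transcription of Sections~\ref{dimTwo}--\ref{32} with $2$ replaced by $m$.
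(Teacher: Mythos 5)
Your proposal is correct and follows the paper's own route: the paper states Theorem~\ref{mm} by pointing out (explicitly for $m=3$, then for general $m$) that Lemma~\ref{bad} and the construction of Subsection~\ref{cex} carry over verbatim once $G(2,N)$ is replaced by $G(m,N)$, ``bad'' means $\dim N(S,L)\le m$, and the no-mixed-term argument of \eqref{22}--\eqref{22concl} still forces $\tilde\gamma_{kp}\tilde\gamma_{kq}=0$ for all $p\ne q$. Your write-up of the general-$m$ case (the $(N-m)\times m$ matrix $M_0$ of dimension $N-m<m(N-m)$, the Lipschitz map $\vf$, the choice of $g\notin\vf(\cL_{bm})$, and the good/bad case split at the end) is exactly the argument the paper is invoking, and the ``bookkeeping WLOG'' point you flag is the same one the paper itself leaves at that level of detail.
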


\medskip


\subsection{Counterexample for $m=3, k=2$ in $\ell^5_N$}
\label{32cex}

Now we work in $F=\ell^5_N$, and we wish to find a subspace $E$ of co-dimension $3$ such that
there is no $2$-dimensional space orthogonal to $E$.

For that we again consider all subspaces $S$ of $F$ of co-dimension $2$ and we  call good those for which
\begin{equation}
\label{4co}
\dim N(S, L) \ge 4\,.
\end{equation}
Otherwise $S$ is bad and the set of bad subspaces is again called $\cL_b$. This is a compact in $G(2, N)$ again.

\begin{lemma}
\label{ell54}
If $F=\ell^5_N$ then $\dim \cL_b <\dim G(2, N)$.
\end{lemma}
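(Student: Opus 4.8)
The plan is to mimic the argument of Lemma~\ref{bad}, but now the normals to $S$ are degree-$4$ rather than degree-$2$ objects. Fix a bad $L\in\cL_b$ and assume WLOG that $L$ meets $S_+$, $L=\{Cx=0\}$ with $C=\{c_{ij}\}$ of rank $N-2$, and that the last $N-2$ columns $\hat C$ are invertible. As in the previous lemma, for $x\in S\cap L$ with $x_i>0$ the (unnormalized) normal to $S=\{\sum|x_i|^5=1\}$ is $e_4(x):=(x_1^4,\dots,x_N^4)$. Solving $\hat C\hat x = x_1 c_{*,1}+x_2 c_{*,2}$ gives $x_k=\gamma_{k1}x_1+\gamma_{k2}x_2$ for $k=3,\dots,N$, where $\gamma_{k\ell}$ are real analytic in the entries of $C$, and $e_4$ restricted to $L\cap S$ becomes $e_4(x_1,x_2)=(x_1^4,x_2^4,(\gamma_{31}x_1+\gamma_{32}x_2)^4,\dots,(\gamma_{N1}x_1+\gamma_{N2}x_2)^4)$, a curve parametrized by $(x_1,x_2)\in\Gamma=\{P(x_1,x_2)=1\}$, where now $P(x_1,x_2)=x_1^5+x_2^5+\sum_{k=3}^N(\gamma_{k1}x_1+\gamma_{k2}x_2)^5$ is a homogeneous quintic. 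Badness, $\dim N(S,L)<4$, means there is a $D=\{d_{ij}\}$ of rank at least $N-3$ (more precisely, the span of the $e_4(x_1,x_2)$ has dimension $\le 3$, so there is an $(N-3)$-dimensional space of linear relations) with $D\,e_4(x_1,x_2)=0$ on $\Gamma$; since $\Gamma$ is a genuine curve (the quintic $P=1$ cannot be contained in finitely many lines through the origin, by the same scaling argument $p(k)x_1^5=1$ as before), each such relation forces $D\,e_4(x_1,x_2)\equiv 0$ identically in $(x_1,x_2)$, i.e. a row $d=(d_1,\dots,d_N)$ gives a quartic form $Q(x_1,x_2)=d_1x_1^4+d_2x_2^4+\sum_{k\ge3}d_k(\gamma_{k1}x_1+\gamma_{k2}x_2)^4\equiv 0$.

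Now the key point: a quartic form in two variables has five coefficients (those of $x_1^4,x_1^3x_2,x_1^2x_2^2,x_1x_2^3,x_2^4$), so $Q\equiv 0$ yields \emph{more} equations than before. Expanding $(\gamma_{k1}x_1+\gamma_{k2}x_2)^4$ and collecting, say, the coefficient of $x_1^3x_2$ gives, for each row of $D$, the relation $\sum_{k=3}^N d_{k}\cdot\gamma_{k1}^3\gamma_{k2}=0$, and collecting the coefficient of $x_1x_2^3$ gives $\sum_{k=3}^N d_k\cdot\gamma_{k1}\gamma_{k2}^3=0$; collecting $x_1^2x_2^2$ gives $\sum_k d_k\,\gamma_{k1}^2\gamma_{k2}^2=0$. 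As in Lemma~\ref{bad}, after the reduction (linearly changing the pair $(\gamma_{*,1},\gamma_{*,2})$ — legitimate exactly as explained at the end of the previous proof, first treating the degenerate case of linearly dependent columns, which lies on a proper subvariety anyway — so that a chosen $(N-2)\times(N-2)$ minor $\hat D$ of the relevant coefficient matrix is the identity) the relation with the identity minor forces
\begin{equation}
\label{gamma4}
\gamma_{k1}^3\gamma_{k2}=0\quad\text{and}\quad \gamma_{k1}\gamma_{k2}^3=0,\qquad k=3,\dots,N,
\end{equation}
which is equivalent to $\gamma_{k1}\gamma_{k2}=0$ for every $k$ — the same conclusion \eqref{0} as in Lemma~\ref{bad}. (Here I am using that with $N-2$ equations and an invertible minor one can solve for the "tail" coefficients; the precise bookkeeping of which minor to use and how many rows $D$ has is handled exactly as in the passage surrounding \eqref{22}--\eqref{22concl}, noting that the rank of $D$ is now $N-3$ rather than $N-2$, which only makes the system of monomial relations at least as constraining.)

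From \eqref{gamma4} we get that the matrix of columns $(\gamma_{*,1}(C),\gamma_{*,2}(C))=f(C)=F(g)$ lands in the proper subvariety $M_0=\{a:a_{k1}a_{k2}=0\ \forall k\}$ of $M_{(N-2)\times 2}$, whose dimension is $N-2<2(N-2)=\dim M_{(N-2)\times2}$. Via the real-analytic diffeomorphism $F:N(g)\to M_{(N-2)\times2}$ constructed in the proof of Lemma~\ref{bad}, this shows $g\in F^{-1}(M_0)$, a proper subvariety of $G(2,N)$; covering $\cL_b$ by such charts and using that a countable union of proper analytic subvarieties has Hausdorff dimension $<\dim G(2,N)$ gives $\dim\cL_b<\dim G(2,N)$, as claimed. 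The main obstacle is not any single computation but making sure the rank-counting for $D$ is done correctly when $\dim N(S,L)$ is allowed to be $2$ or $3$ (not just $2$): one must check that in either case at least one nontrivial quartic relation survives and that, after the linear normalization of $(\gamma_{*,1},\gamma_{*,2})$, one can extract an invertible $(N-2)\times(N-2)$ block of coefficients so that \eqref{gamma4} follows. Everything else is a routine transcription of Lemma~\ref{bad} with "degree $2$" replaced by "degree $4$" and "$x_1x_2$ coefficient" replaced by "$x_1^3x_2$ and $x_1x_2^3$ coefficients."
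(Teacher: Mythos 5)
There is a genuine gap at the final algebraic step. After you write $D\,e_4(x_1,x_2)\equiv 0$, the matrix $D$ has only $N-3$ rows (badness means $\dim N(S,L)\le 3$, so the space of linear relations among the $e_4$'s has dimension exactly $N-3$). You then claim that, after the linear normalization, ``one can extract an invertible $(N-2)\times(N-2)$ block of coefficients so that \eqref{gamma4} follows.'' This is impossible: a matrix with $N-3$ rows has no $(N-2)\times(N-2)$ minor at all, and your remark that the lower rank ``only makes the system of monomial relations at least as constraining'' is the opposite of the truth — fewer rows of $D$ means fewer equations, hence \emph{less} constraining. Consequently the conclusion $\gamma_{k1}\gamma_{k2}=0$ for all $k$ does not follow; it is simply too strong.

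What actually happens (and what the paper does) is this. After normalizing so that $\tilde Q_{N-1}=x_1^4$, $\tilde Q_N=x_2^4$, write $\tilde D$ for the first $N-2$ columns of $D$, an $(N-3)\times(N-2)$ matrix of rank $N-3$. Collecting coefficients of the three mixed monomials $x_1^3x_2,\ x_1^2x_2^2,\ x_1x_2^3$ gives $\tilde D\,\vec v_{31}=\tilde D\,\vec v_{22}=\tilde D\,\vec v_{13}=0$, where $\vec v_{31},\vec v_{22},\vec v_{13}\in\bR^{N-2}$ have components $\tilde\gamma_{k1}^3\tilde\gamma_{k2}$, $\tilde\gamma_{k1}^2\tilde\gamma_{k2}^2$, $\tilde\gamma_{k1}\tilde\gamma_{k2}^3$. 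Since $\ker\tilde D$ is one-dimensional, the three vectors must be pairwise colinear. That colinearity — not vanishing — is the algebraic condition that cuts out a proper real-analytic subvariety (e.g.\ $(\tilde\gamma_{*,1})$ proportional to $(\tilde\gamma_{*,2})$, or various $\tilde\gamma$'s vanishing), and it is this weaker constraint that you must feed into the $F:N(g)\to M_{(N-2)\times 2}$ chart argument. Your proof as written skips exactly this colinearity step, so it does not close.
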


\begin{proof}

To prove lemma we need to repeat a piece of Subsection \ref{abit}.
We consider vectors $e_2(x)$ as before, but now they deserve the name $e_4(x)$. They
become 
$$
e_4(x_1, x_2) = (x_1^4, x_2^4, Q_3(x_1, x_2),\dots, Q_N(x_1, x_2))\,,
$$
where $Q_k(x_1, x_2)$ are homogeneous polynomials of degree $4$.  
$$
Q_k(x_1, x_2) = (\gamma_{k1} x_1 +\gamma_{k2} x_2)^4\quad k=3,\dots, N\,.
$$

Can such a collection $\{e_4(x_1, x_2)\}_{(x_1, x_2) \in \Gamma}\}$ span only $3$ dimensional space, where $\Gamma=\{(x_1, x_2): P(x_1, x_2)=1\}$, and where $P$ is a (homogeneous) polynomial of degree $5$?  If so, then there exists matrix
$D=\{d_{ij}\}_{i=1,\dots, N-3; j=1, \dots, N}$, whose rank is $N-3$, such that
$$
De_4(x_1, x_2) =0, \quad \forall (x_1, x_2): P(x_1, x_2)=1\,.
$$
This is possible only if 
\begin{equation}
\label{equiv5}
De_4(x_1, x_2) \equiv 0\quad  \forall (x_1, x_2).
\end{equation}

Explanation: take a row of $D$, $d=(d_1, d_2, \dots, d_N)$, let $Q(x_1, x_2) :=
d_1x_1^4+ d_2 x_2^4+ d_3Q_3(x_1, x_2)+\dots+ d_n Q_N(x_1, x_2)$.  From the displays above we get that as soon as $P(x_1, x_2)=1$ we have $Q(x_1, x_2)=0$, where $Q$ is a homogeneous polynomial of degree $4$. This is of course impossible, unless $Q\equiv 0$. In fact, genuine curve of degree $5$ $P=1$ on the plane cannot sit inside the zero set of homogeneous polynomial of degree $4$, when $P$ is a homogeneous polynomial of degree $5$.

\bigskip

So we start with \eqref{equiv5}. Assume now that the invertible square $(N-3)\times (N-3)$ minor $\hat  D$ of $D$ is the first $N-3$ columns of $D$ (it is really WLOG, as the reader will see from what follows).

Coming back to \eqref{equiv}: $De_4(x_1, x_2) \equiv 0\,\, \forall (x_1, x_2)$, we can now make a linear change of variable:  $\gamma_{*, i} \to \tilde\gamma_{*, i}$, $i=1,2$, in such a way that
$$
\tilde\gamma_{N-1, 1} =1, \tilde\gamma_{N, 1} =0; \quad \tilde\gamma_{N-1, 2} =0, \tilde\gamma_{N, 2} =1\,.
$$
Then we can rewrite  $De_4(x_1, x_2) \equiv 0\,\, \forall (x_1, x_2)$ as equation with forms $\tilde Q_k:=(\tilde\gamma_{k1} x_1+\tilde\gamma_{k2} x_2)^4$,$k=1,\dots, N-2$, $\vec Q:= \{\tilde Q_k\}_{k=1,\dots, N-2}$, $\vec d_{N-1} := \{d_{k, N-1}\}_{k=1,\dots, N-2}$,  $\vec d_{N} := \{d_{k, N}\}_{k=1,\dots, N-2}$,
$\tilde D= (\hat D, \vec d_{N-2})$ as follows

\begin{eqnarray}
\label{532}
&\hat D \vec Q =  \vec d_{N-1} (\tilde\gamma_{N-1, 1} x_1 + \tilde\gamma_{N-1, 2}x_2)^4 +  \vec d_{N} (\tilde\gamma_{N, 1} x_1 + \tilde\gamma_{N, 2}x_2)^4=\notag
\\
&= x_1^4 \vec d_{N-1} + x_2^4 \vec d_{N}
\end{eqnarray}
 The right hand side does not have terms $x_1^3x_2, x_1^2x_2^2, x_1x_2^3$. But matrix $\tilde D$ has size $(N-3)\times (N-2)$. And its $N-3$ rows span a space of co-dimension $1$ in $\bR^{N-2}$. Hence all orthogonal vectors to this span are co-linear. There are three such vectors:
 $$
 \vec v_{22}:= (\tilde \gamma_{11}^2\cdot \tilde\gamma_{12}^2, \dots, \tilde\gamma_{N-2,1}^2\cdot \tilde\gamma_{N-2,2}^2),
 $$
 $$
 \vec v_{31}:= \tilde(\gamma_{11}^3\cdot \tilde\gamma_{12}, \dots, \tilde\gamma_{N-2,1}^3\cdot \tilde\gamma_{N-2,2}),
 $$
$$
 \vec v_{13}:= (\tilde\gamma_{11}\cdot \tilde\gamma_{12}^3, \dots, \tilde\gamma_{N-2,1}\cdot\tilde \gamma_{N-2,2}^3).
 $$
 Obviously they are co-linear only if vector $(\tilde \gamma_{11},\dots, \tilde \gamma_{N-2, 1})$ is proportional to
 \linebreak $(\tilde \gamma_{12},\dots, \tilde \gamma_{N-2, 2})$ or if some of these $\tilde\gamma$'s are zeros.
 We get  algebraic relationships (akin to \eqref{0} of the previous Section)  on  entries of matrix $C$ that gave us  bad subspace $L$, as in the previous Section.
  So as soon as $2$-dimensional subspace $L$ (parametrized by $C$ above) has $D$ as above (i.e. the space spanned by normals to $N(S, L)$ is also $3$-dimensional) immediately we have certain algebraic relationship on entries of matrix $C$.  That means that the dimension of set of bad $L$'s is strictly smaller than dimension of all $L$'s. This gives us the proof of Lemma \ref{ell54}.
 \end{proof}

Now we repeat verbatim Subsection  \ref{cex}. Then we get
\begin{theorem}
\label{thm32}
In $\ell^5_N$ there exists a subspace of co-dimension $3$ such that there is no $2$ dimensional subspace that is orthogonal to it.
\end{theorem}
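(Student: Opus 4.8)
The plan is to transcribe, almost word for word, the construction of Subsection~\ref{cex}, now with $F=\ell^5_N$ and with the ambient Grassmannian of the final selection step enlarged from $G(2,N)$ to $G(3,N)$. Let $\cL_b\subset G(2,N)$ be the compact set of bad $2$-dimensional subspaces of $F$, i.e. those $L$ for which $\dim N(S,L)<4$; for such $L$ one has $\dim N(S,L)\in\{2,3\}$, and the subspace $N(S,L)$ depends Lipschitz-continuously on $L\in\cL_b$, by the smoothness and strict convexity of the unit sphere $S$ and exactly as in Subsection~\ref{cex}. Lemma~\ref{ell54} gives $\dim\cL_b<\dim G(2,N)$.

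To choose $E$, I would pass to
\[
\mathcal G_{\mathrm{bad}}:=\Set{g\in G(3,N)\colon N(S,L)\subset g\ \text{for some}\ L\in\cL_b},
\]
a compact subset of $G(3,N)$. It is the image, under projection to the second factor, of the incidence set $\Set{(L,g)\colon N(S,L)\subset g}\subset\cL_b\times G(3,N)$, whose fiber over $L$ is a single point if $\dim N(S,L)=3$ and an $(N-3)$-dimensional family of $3$-planes if $\dim N(S,L)=2$; hence
\[
\dim\mathcal G_{\mathrm{bad}}\le\max\bigl(\dim\cL_b,\ \dim\{L\in\cL_b:\dim N(S,L)=2\}+(N-3)\bigr).
\]
Reading the quantitative content off the proof of Lemma~\ref{ell54} — the bad locus is cut out by the collinearity of $\vec v_{22},\vec v_{31},\vec v_{13}$, and the stratum $\dim N(S,L)=2$ corresponds to their simultaneous vanishing — one sees that both quantities on the right are $O(N)$, so $\dim\mathcal G_{\mathrm{bad}}<\dim G(3,N)=3(N-3)$ for $N$ large enough ($N\ge5$ suffices). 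Thus $\mathcal G_{\mathrm{bad}}$ is a compact proper subset of $G(3,N)$, and I pick
\[
g\in G(3,N)\setminus\mathcal G_{\mathrm{bad}},\qquad E:=E_g=\Set{e\in F\colon (e,n)=0\ \forall\,n\in g},
\]
a subspace of co-dimension exactly $3$ in $F$.

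It then remains to verify that no $2$-dimensional subspace $L\subset F$ is orthogonal to $E$. As in the second proof of Theorem~\ref{borsuk} in Section~\ref{q2}, by \eqref{proj} and the smoothness of $S$ the condition ``$L$ is orthogonal to $E$'' is equivalent to $E\subset\bigcap_{u\in L}\Ker f_u=\bigcap_{n\in N(S,L)}\Ker(n,\cdot)$, and, on taking orthogonal complements for the Hilbert duality, this is equivalent to $N(S,L)\subset g$. If $L$ is good, then $\dim N(S,L)\ge4>3=\dim g$ by \eqref{4co}, so $N(S,L)\not\subset g$; if $L$ is bad, then $N(S,L)\not\subset g$ by the very choice $g\notin\mathcal G_{\mathrm{bad}}$. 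In either case $L$ is not orthogonal to $E$, which is the assertion of the theorem.

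The step I expect to be the main obstacle is the dimension count for $\mathcal G_{\mathrm{bad}}$: Lemma~\ref{ell54} as stated delivers only $\dim\cL_b<\dim G(2,N)=2N-4$, whereas completing a $2$-dimensional $N(S,L)$ to a $3$-plane costs an extra $N-3$ dimensions, so one needs the sharper (but routine) estimate that $\dim\cL_b$, and the dimension of the degenerate stratum $\{L\in\cL_b:\dim N(S,L)=2\}$, grow only linearly in $N$; both are read off by inspecting the proof of Lemma~\ref{ell54}, and they are what force $N$ to be taken large. Everything else is a verbatim repetition of Subsections~\ref{abit}--\ref{cex} and of the topological/dimension bookkeeping there.
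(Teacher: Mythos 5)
Your proof is correct, and it supplies a step that the paper's own text skips over. After Lemma~\ref{ell54} the paper simply writes ``Now we repeat verbatim Subsection \ref{cex},'' but this cannot be done literally: in the $\ell^3_N$ setting of Subsection~\ref{cex} every bad $L$ has $\dim N(S,L)$ exactly equal to $2$, so $\vf(L)=N(S,L)$ is a Lipschitz map $\cL_b\to G(2,N)$ and one just chooses $g$ in the complement of its image; in the $\ell^5_N$ setting a bad $L$ may have $\dim N(S,L)\in\{2,3\}$, so there is no single Grassmannian to map $\cL_b$ into, and for the stratum $\dim N(S,L)=2$ the set of $g\in G(3,N)$ \emph{containing} $N(S,L)$ is an $(N-3)$-parameter family rather than a point. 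You diagnose this correctly and pass to the incidence set, estimating the two strata separately. The sharper bound you need on the degenerate stratum is indeed available from inside the proof of Lemma~\ref{ell54}: after the normalization performed there, $\dim N(S,L)=2$ forces $\tilde\gamma_{k1}\tilde\gamma_{k2}=0$ for every $k$ (equivalently, the simultaneous vanishing of $\vec v_{22},\vec v_{31},\vec v_{13}$), which cuts out a set of dimension at most $N-2$ in $G(2,N)$; together with $\dim\cL_b\le 2N-5$ from Lemma~\ref{ell54} this gives $\dim\mathcal G_{\mathrm{bad}}\le\max\bigl(2N-5,\ (N-2)+(N-3)\bigr)=2N-5<3(N-3)$ once $N\ge 5$, exactly as you claim. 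The duality step $E\subset\bigcap_{u\in L}\Ker f_u\iff N(S,L)\subset g$ and the good/bad case analysis then reproduce Subsection~\ref{cex} step for step. In short: same approach as the paper, but you make explicit the extra dimension bookkeeping that ``repeat verbatim'' actually requires, and that bookkeeping is right.
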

 
\medskip

\subsection{Bookkeeping}

We can use $\ell^5_N$ to build a $(4, 2)$ counterexample and  for $(4,3)$ counterexample. For $(5, 2)$  counterexamples we will  need $\ell^7_N$.
Et cetera. 

In  fact, if we wish to construct a $(m, k)$ counterexample for $1<k\le m$, we take large $N$, say, $N= 2m$, and consider the first odd number $2s+1$ such that
$$
2s+1 >m-k+2.
$$
Then we can construct  a subspace $E\subset \ell^{2s+1}_N$, of co-dimension $m$, such that there is no subspaces of $\ell^{2s+1}_N$ of dimensions $k$ orthogonal to $E$.

If we want one $E$ that serves all $k=2, 3, \dots, m$ simultaneously, we choose $2s+1>m$, and in $ \ell^{2s+1}_N$ one can construct  such a subspace $E$.

\medskip

Now consider the space
$$
X=\bigoplus_{m=1}^\infty \ell^{2m+1}_{2m}\,.
$$
In this space we can find the sequence $F_m, E_m$ of finitely dimensional subspaces, such that $\dim F_m-\dim E_m =m$, and we can find a sequences of numbers $\{\eps_m\}$, $\eps_m\to 0$, such that  for any subspace  $K_m$ of $F_m$, the orthogonality of $K_m$ to $E_m$ is {\it  worse} than $1-\eps_m$.

\medskip

{\bf Open question.}
So, we cannot find {\it very orthogonal} $K_m$.  But maybe this sequence  $\{\eps_m\}$ above goes to zero too fast? So, still maybe, given a space $X$ and given $F_m, E_m$ of finitely dimensional subspaces, such that $\dim F_m-\dim E_m =m$,  we can always  find {\it some} slowly going to zero sequence $\{\eps_m\}$, such that 
$K_m$ that is $1-\eps_m$ orthogonal to $E_m$ does exist. This we do not know.

\section{Why that was counterintuitive, Grassmannians and their cohomology}
\label{cohom}

The result of the previous Section seems counterintuitive. Let us explain why it seems counterintuitive.

There are elementary proofs of Borsuk--Ulam theorem, but most common proofs go through the following reasoning.
A simple commutative diagram shows that if there is an odd  continuous  map $S^N$ into $S^n$, $N>n$, then
there is \cite{F} 1) a continuous map of real  projective space $\bR\bP^N$ into $\bR\bP^n$ such that 2) the induced map between cohomology rings $f^*:H^*(\bR\bP^n, \bZ_2) \to H^*(\bR\bP^N, \bZ_2)$ behaves like that: 
\begin{equation}
\label{prop}f^*(x)=y\,.
\end{equation}
Having in mind that
$$
H^*(\bR\bP^k) = \bZ_2[x]/(x^{k+1})
$$
we get  $0= f^*(x^{n+1}) = y^{n+1} \neq 0$,
bringing the contradiction.

Real projective space $\bR\bP^n$ is just a Grassmannian $G(1,\bR^n)$. To build a one-dimensional space in $F$ that is ``orthogonal'' to the whole $n$-dimensional $E$ (see Theorem \ref{borsuk}) one builds an odd continuous map $S^N$ into $S^n$, $N>n$, $N=\dim F$, $n=\dim E$. In other words, one builds a special map from $G(1,\bR^N)$ to $G(1,\bR^n)$, whose induced cohomology ring structure mapping has extra property \eqref{prop}.

\medskip

However, we need more, namely we need to build not just a $1$-dimensional space of $F$ ``orthogonal'' to $E$ but a {\it high} dimensional space of $F$ ``orthogonal'' to $E$. Hopefully  even $\dim F-\dim E= N-n$-dimensional one.

For that we need to see how, assuming that there is no such $k$-dimensional  subspace in $F$, $k>1$, ``orthogonal'' to $E$, one can construct the continuous map of Grassmannians $G(k,\bR^N)$  to $G(k,\bR^n)$ with the property of induced map $f^*$ on total cohomology  rings (which are known and computed) that brings the contradiction as above.

\medskip

The main thing is to construct a geometrically meaningful continuous map of Grassmannians $G(k,\bR^N)$  to $G(k,\bR^n)$ given that  there is no such $k$-dimensional  subspace in $F$, $k>1$, ``orthogonal'' to $E$. For $k=1$ this has been done--easily--in the proofs of Theorem \ref{borsuk}.
But  we constructed exactly the situation when there is no such $k$-dimensional  subspace in $F$, $k>1$, ``orthogonal'' to $E$ even just for $k=2$. So the hope to use the algebraic topology approach to prove some generalization of Theorem \ref{kkm}  seems to be vain.

\section{Background - Fredholm Theory and Orthogonal subspaces}
\label{fredholm}

While questions 1 and 2 considered in the previous sections relate to the fundamental properties of finite-dimensional Banach spaces, these questions came up as a result of investigation of stability of Fredholm operators and pairs of subspaces. The following briefly explains this context.

Traditionally, stability of operators has been studied by perturbing a Fredholm operator by a small norm or compact operator, or by an operator with a small measure of noncompactness. It has also been established for strictly singular perturbations. Fredholm properties of a pair of subspaces in Banach spaces have been studied in the gap topology and for strictly singular subspace perturbations. In another trend, stability of operators was studied for collectively compact operators or, equivalently, compact approximation. 

The work \cite{BB} contains a list of  references to the original contributions in these areas from the first half of the 20th century until recent times. In that work, a new way of looking at the Fredholm stability in Banach spaces has been proposed, which subsumed many of the existing at that time directions. Several Fredholm stability theorems have been shown to be still true in this general setting. However, a few questions had arisen that have not been resolved so far.

To explain the unresolved questions, we only need two definitions from \cite{BB} (note that below $\mathbb{N}^{'''} \subset\ \mathbb{N}^{''} \subset\ \mathbb{N}^{'}$ are always infinite subsets of the set of natural numbers $\mathbb{N}$).

\begin{definition}[$\lambda-$Adjustment of Sequences of Subspaces]\label{D:ulass} 
Let $(M_{n})_{\mathbb{N}^{'}}$ and $(P_{n})_{\mathbb{N}^{'}}$ be a pair of sequences of closed subspaces from a Banach space $X$, ${M_{n} \neq \{\theta\}}$ for all ${n \in \mathbb{N}^{'}}$ and $\lambda \geq 0$. We say that $(M_{n})_{\mathbb{N}^{'}}$ is $\lambda-adjusted$ with $(P_{n})_{\mathbb{N}^{'}}$ if for any $\eta > 0$ and for any unit subsequence $(x_{n})_{\mathbb{N}^{''}}$ from $(M_{n})_{\mathbb{N}^{''}}$ there exists a subsequence $(y_{n})_{\mathbb{N}^{'''}}$ from $(P_{n})_{\mathbb{N}^{'''}}$ and a vector $z \in X$ such that 
\[
\varlimsup_{n \in \mathbb{N}^{'''}} \left\|x_{n} - y_{n} - z \right\|\ \leq\ \lambda + \eta. 
\]
The $\lambda-$adjustment between $(M_{n})_{\mathbb{N}^{'}}$ and $(P_{n})_{\mathbb{N}^{'}}$ is a non-negative real number defined as
\[
\lambda_{\mathbb{N}^{'}}[M_{n}, P_{n}]\ :=\ \inf \{ \lambda \in \mathbb{R} \mid (M_{n})_{\mathbb{N}^{'}}\ \text{is $\lambda-$adjusted with}\ (P_{n})_{\mathbb{N}^{'}}\}.
\]
\end{definition}

Observe that above $z$ can be any vector from $X$. If $z$ is always a $null$ vector, then the definition becomes similar to the proximity in the sense of the gap distance as defined in \cite{KKM}. However, when $z$ is not always $null$, the two adjusted sequences of subspaces can be far apart in the gap sense. Still, certain Fredholm stability properties can be established for both the pairs of subspaces and for operators when considering sequences of operator graphs as sequences of subspaces in the product space of their domain and range.

An extension of the $\lambda-$adjustment can be achieved via the following Definition that is less restrictive than the previous one (note that below $(K_{n})_{\mathbb{N}^{''}} \prec (M_{n})_{\mathbb{N}^{''}}$ means each $K_{n}$ is a subspace of $M_{n}$ for all $n$) 

\begin{definition}[Finitely Strictly Singular $\lambda-$Adjustment]\label{ssua}
Let $(M_{n})_{\mathbb{N}^{'}}$ and $(P_{n})_{\mathbb{N}^{'}}$ be a pair of sequences of closed subspaces from a Banach space $X$, ${M_{n} \neq \{\theta\}}$ for all ${n \in \mathbb{N}^{'}}$ and $\lambda \geq 0$. We say that $(M_{n})_{\mathbb{N}^{'}}$ is $finitely$\ $strictly$\ $singular$\ $\lambda-adjusted$ with $(P_{n})_{\mathbb{N}^{'}}$ if for any subsequence of subspaces $(K_{n})_{\mathbb{N}^{''}} \prec (M_{n})_{\mathbb{N}^{''}}$ such that ${\dim K_{n} < \infty}$ for all ${n \in \mathbb{N''}}$ and ${\lim_{n \in \mathbb{N}^{''}} \dim K_{n} = \infty}$ there exists a subsequence of subspaces $(L_{n})_{\mathbb{N}^{'''}} \prec (K_{n})_{\mathbb{N}^{'''}}$ such that ${\lim_{n \in \mathbb{N}^{'''}} \dim L_{n} = \infty}$ with the property ${\lambda_{\mathbb{N}^{'''}}[L_{n}, P_{n}] \leq \lambda}$. 
\end{definition}

The above definition subsumes finitely strictly singular operators and pairs of subspaces. Again, a semi-Fredholm stability can be proven for $\lambda = 0$ or small enough positive $\lambda$. An interesting example of such a sequence are the graphs of compositions of a finitely strictly singular operator with a sequence of the linear operators having a common norm bound - such compositions are finitely strictly singular $0-$adjusted with a $null$ operator $\theta$.

It is instrumental to consider both kinds of $\lambda-$adjustment when the subspaces in question are finite-dimensional. For example, one can prove that if dimensions of all $P_{n}$ are limited from above, then there exists no $(M_{n})_{\mathbb{N}^{'}}$ that is finitely strictly singular $\lambda$-adjusted with $(P_{n})_{\mathbb{N}^{'}}$ if $\lambda < 1/2$. Yet, when dimensions of $(P_{n})_{\mathbb{N}^{'}}$ approach infinity, the question arises as to a relation between these dimensions and dimensions of $(M_{n})_{\mathbb{N}^{'}}$. The behavior in case of a $\lambda-$adjustment appears to differ from finitely strictly singular $\lambda-$adjustment.

One can prove the following:

\begin{lemma}
\label{limit}
For $\lambda < 1/2$ and $(M_{n})_{\mathbb{N}^{'}}$ which is $\lambda-$adjusted with finite-dimensional $(P_{n})_{\mathbb{N}^{'}}$, there exists a constant integer $C$ such that $\dim M_{n} < \dim P_{n} + C$ for all large enough $n$.
\end{lemma}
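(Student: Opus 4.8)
The plan is to argue by contradiction: from a failure of the conclusion I will manufacture one ``bad'' unit subsequence drawn from the $M_n$ which no single correcting vector $z$ can repair, contradicting the hypothesis that $(M_n)_{\mathbb{N}'}$ is $\lambda$-adjusted with $(P_n)_{\mathbb{N}'}$. First I would record the reduction. If the asserted $C$ does not exist, then $\dim M_n-\dim P_n$ is unbounded on every tail of $\mathbb{N}'$, so there is an infinite $\mathbb{N}_0\subseteq\mathbb{N}'$ along which $\dim M_n-\dim P_n\to\infty$. (I treat the $M_n$ as finite dimensional, the only case in which the inequality is not vacuous; if some $\dim M_n=\infty$ one simply works inside a finite-dimensional subspace of $M_n$ of large dimension.) Fix once and for all $\eta:=(1-2\lambda)/8>0$, chosen so that $2(\lambda+2\eta)=\lambda+\tfrac12<1$, using $\lambda<\tfrac12$.

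The core step is a recursive extraction of a \emph{sparse} subsequence $n_1<n_2<\cdots$ of $\mathbb{N}_0$ together with unit vectors $x_{n_k}\in M_{n_k}$ that are as orthogonal as possible to everything built so far. Having chosen $n_1,\dots,n_{k-1}$ and $x_{n_1},\dots,x_{n_{k-1}}$, set $R_{k-1}:=(k-1)+\sum_{i=1}^{k-1}\dim P_{n_i}$ and use $\dim M_n-\dim P_n\to\infty$ along $\mathbb{N}_0$ to pick $n_k\in\mathbb{N}_0$, $n_k>n_{k-1}$, with $\dim M_{n_k}-\dim P_{n_k}>R_{k-1}$. Then the subspace $E_k:=\spn(x_{n_1},\dots,x_{n_{k-1}})+P_{n_1}+\cdots+P_{n_k}$ of $X$ has $\dim E_k\le R_{k-1}+\dim P_{n_k}<\dim M_{n_k}$, so Theorem~\ref{kkm} furnishes a unit vector $x_{n_k}\in M_{n_k}$ with $\dist(x_{n_k},E_k)=1$. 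The payoff is a separation property: for every $j<k$ the affine set $x_{n_j}+P_{n_j}+P_{n_k}$ is contained in $E_k$, whence
\[
\dist\bigl(x_{n_k}-x_{n_j},\,P_{n_k}+P_{n_j}\bigr)=\dist\bigl(x_{n_k},\,x_{n_j}+P_{n_j}+P_{n_k}\bigr)\ge\dist(x_{n_k},E_k)=1 .
\]

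Next I would derive the contradiction. Put $\mathbb{N}'':=\{n_1<n_2<\cdots\}$ and apply the hypothesis to the unit subsequence $(x_n)_{\mathbb{N}''}$ and the above $\eta$: there exist an infinite $\mathbb{N}'''\subseteq\mathbb{N}''$, vectors $y_n\in P_n$ for $n\in\mathbb{N}'''$, and $z\in X$ with $\varlimsup_{n\in\mathbb{N}'''}\|x_n-y_n-z\|\le\lambda+\eta$, hence $\|x_n-y_n-z\|<\lambda+2\eta$ for all sufficiently large $n\in\mathbb{N}'''$. Choosing two such indices $n<m$ and using $y_m-y_n\in P_m+P_n$,
\[
1\le\dist\bigl(x_m-x_n,\,P_m+P_n\bigr)\le\bigl\|(x_m-y_m-z)-(x_n-y_n-z)\bigr\|<2(\lambda+2\eta)=\lambda+\tfrac12<1 ,
\]
which is absurd. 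Therefore $(M_n)_{\mathbb{N}'}$ is not $\lambda$-adjusted with $(P_n)_{\mathbb{N}'}$, contrary to hypothesis, so $\dim M_n-\dim P_n$ is eventually bounded above and the required integer $C$ exists.

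I expect the one real difficulty to be precisely the free vector $z$ in the definition of $\lambda$-adjustment: a lone unit vector orthogonal to $P_n$ (which Theorem~\ref{kkm} readily supplies) can be pulled within distance $\lambda<\tfrac12$ of $P_n+\mathbb{R}z$ by a well-chosen $z$, so a single application of the Krein--Krasnoselski--Milman theorem is not enough. The fix is to force the $x_n$ to be mutually $1$-separated \emph{modulo $P_n+P_m$}, so that no single $z$ can be within $\lambda+2\eta<\tfrac12$ of $x_n-y_n$ for two distinct indices; and arranging this requires thinning $\mathbb{N}_0$ so aggressively that $\dim M_{n_k}$ overtakes the accumulated dimension $(k-1)+\sum_{i\le k}\dim P_{n_i}$ — which is available exactly because $\dim M_n-\dim P_n\to\infty$. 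Beyond that, the argument is routine triangle-inequality bookkeeping.
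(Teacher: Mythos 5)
The paper states Lemma~\ref{limit} without proof (it appears only with the preamble ``One can prove the following''), so there is no in-paper argument to compare against. That said, your proposal is correct, and it is precisely the argument the context invites: use Theorem~\ref{kkm} to manufacture a unit subsequence that is $1$-separated modulo the relevant $P_{n}$'s, and then let the $\lambda$-adjustment hypothesis with $\lambda<1/2$ collide with that separation.

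A few checks worth recording. Your reduction to $\dim M_{n}-\dim P_{n}\to\infty$ along an infinite $\mathbb{N}_{0}\subseteq\mathbb{N}'$ is exactly the negation of the conclusion (and your parenthetical remark handles the degenerate case $\dim M_{n}=\infty$ by passing to a large finite-dimensional subspace, which is needed because Theorem~\ref{kkm} is stated for finite-dimensional $F$). The recursive extraction is set up so that $E_{k}=\spn(x_{n_{1}},\dots,x_{n_{k-1}})+P_{n_{1}}+\cdots+P_{n_{k}}$ has $\dim E_{k}\le R_{k-1}+\dim P_{n_{k}}<\dim M_{n_{k}}$, so KKM applies and yields a unit $x_{n_{k}}\in M_{n_{k}}$ at distance $1$ from $E_{k}$; since $x_{n_{j}}+P_{n_{j}}+P_{n_{k}}\subseteq E_{k}$ for $j<k$, the separation $\dist(x_{n_{k}}-x_{n_{j}},P_{n_{k}}+P_{n_{j}})\ge1$ follows by the translation identity $\dist(a-b,V)=\dist(a,b+V)$. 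The numerical bookkeeping is also right: with $\eta=(1-2\lambda)/8$ one has $2(\lambda+2\eta)=\lambda+\tfrac12<1$, and the triangle inequality applied to $(x_{m}-y_{m}-z)-(x_{n}-y_{n}-z)$ gives $\dist(x_{m}-x_{n},P_{m}+P_{n})<\lambda+\tfrac12$, contradicting the separation $\ge1$. Your closing paragraph correctly identifies why the free vector $z$ forces you to build a whole $1$-separated sequence rather than apply KKM once per index; the thinning by $R_{k-1}$ is exactly the device that defeats $z$. No gaps.
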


Note that the above Lemma \ref{limit} is not true in $X = \ell_{\infty}$ when $\lambda \geq 1/2$. Still, with some additional effort, it is possible to prove this Lemma for any $\lambda < 1$ in a Banach space $X$ which dual is a Fr\'{e}chet-Urysohn topological space in ${weak^{*}}$ topology. An interesting question remains if this limiting dimension property for $\lambda < 1$ characterizes such Banach spaces?

However, for finitely strictly singular $\lambda-$adjustment in Banach spaces, Lemma \ref{limit} remains unproven (although it is obviously true for Hilbert spaces). In other words, it is not clear if the following is true:

\begin{conj}
\label{unknown}
For $\lambda < 1$ and $(M_{n})_{\mathbb{N}^{'}}$ which is finitely strictly singular $\lambda-$adjusted with finite-dimensional $(P_{n})_{\mathbb{N}^{'}}$, there exists a constant integer $C$ such that $\dim M_{n} < \dim P_{n} + C$ for all large enough $n$.
\end{conj}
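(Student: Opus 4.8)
\medskip

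\emph{A plan of attack for Conjecture \textup{\ref{unknown}}.} The natural route is contraposition. If no bounding constant $C$ works, then along an infinite subset one has $\dim M_n - \dim P_n \to \infty$, and the task becomes to exhibit, inside $(M_n)$, a sequence of finite-dimensional subspaces $(K_n)$ with $\dim K_n \to \infty$ witnessing the \emph{failure} of finitely strictly singular $\lambda$-adjustment: for \emph{every} further $(L_n)\prec(K_n)$ with $\dim L_n\to\infty$ one must have $\lambda_{\bullet}[L_n,P_n]>\lambda$. Concretely, for each such $(L_n)$ one must produce a \emph{single} unit sequence $(x_n)\prec(L_n)$ that no fixed vector $z$ can approximate modulo $P_n$ to within $\lambda$, i.e. $\inf_{z}\limsup_n \dist(x_n-z,P_n)>\lambda$. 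Two structural features encourage this approach: the vector $z$ in the definition of $\lambda$-adjustment is a single vector independent of $n$, so a sufficiently ``spread out'' sequence $(x_n)$ cannot be tracked by it; and the finitely strictly singular hypothesis is universally quantified over $(K_n)$, so it is enough to defeat it for one cleverly built $K_n$.

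I would first dispatch the Hilbert space case, which both motivates the strategy and recovers the ``obviously true'' remark after the conjecture. Take $K_n:=M_n\cap P_n^{\perp}=\Ker\bigl(\mathrm{proj}_{P_n}\!\restriction_{M_n}\bigr)$, so $\dim K_n\ge \dim M_n-\dim P_n\to\infty$ and every nonzero subspace of $K_n$ is genuinely orthogonal to $P_n$. Given $(L_n)\prec(K_n)$ with $\dim L_n\to\infty$, pass to a subsequence with $\dim L_n\ge n$ and pick unit $x_n\in L_n$ orthogonal to $\mathrm{span}(x_1,\dots,x_{n-1})$; then $(x_n)$ is orthonormal, hence weakly null. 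For any $z$, write $z=z'+z''$ with $z'\in P_n$ and $z''\perp P_n$; since $x_n\in K_n\subset P_n^{\perp}$ one gets $\dist(x_n-z,P_n)=\|x_n-z''\|\ge\sqrt{\,1-2\langle x_n,z\rangle\,}$, and $\langle x_n,z''\rangle=\langle x_n,z\rangle\to0$. Hence $\limsup_n\dist(x_n-z,P_n)\ge1>\lambda$ for every $z$, so $\lambda_{\bullet}[L_n,P_n]\ge1$, proving the conjecture in this case.

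For a general Banach space $X$ the orthogonal complement is gone, and the substitute I would try is an iterated use of Theorem \ref{borsuk} (the Krein--Krasnoselski--Milman/Borsuk--Ulam mechanism): inside each $M_n$ build a ``staircase'' of unit vectors $u_n^1,\dots,u_n^{k_n}$, with $k_n:=\dim M_n-\dim P_n\to\infty$, such that $\dist\bigl(u_n^j,\;P_n+\mathrm{span}(u_n^1,\dots,u_n^{j-1})\bigr)=1$ for all $j$, and set $K_n:=\mathrm{span}(u_n^1,\dots,u_n^{k_n})$. This staircase has two convenient properties: each $u_n^j$ is at distance exactly $1$ from $P_n$, and $\dist(u_n^i-u_n^j,P_n)\ge1$ for all $i\ne j$, so the staircase vectors are ``pairwise orthogonal to $P_n$'' and mutually well separated modulo $P_n$. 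From an arbitrary $(L_n)\prec(K_n)$ one would then try to extract a unit sequence $(x_n)$ that is ``diverse modulo $P_n$'' — for instance a weakly null selection, or one whose directions modulo $P_n$ oscillate among the separated staircase directions — so that for any candidate $z$ infinitely many $x_n$ have $\dist(x_n-z,P_n)>\lambda$.

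The main obstacle — and the reason this is a conjecture rather than a theorem — is precisely the phenomenon exhibited by the counterexamples of Sections \ref{dimTwo}--\ref{32}: a general Banach space has no device forcing a high-dimensional subspace of $M_n$ to remain uniformly far from $P_n$. Although the $u_n^j$ are individually at distance $1$ from $P_n$, a generic unit vector $\sum_j a_j u_n^j\in K_n$ is only guaranteed to satisfy $\dist(\,\cdot\,,P_n)\ge|a_m|$ with $m$ the largest nonzero index, so it can be arbitrarily close to $P_n$; and a subspace $L_n\subset K_n$ with $\dim L_n$ far smaller than $\dim P_n$ — which is all the hypothesis guarantees, since $\dim K_n$ may itself be much smaller than $\dim P_n$ — could in principle consist almost entirely of vectors within $\lambda$ of $P_n$, making $\lambda_{\bullet}[L_n,P_n]$ small with the choice $z=0$. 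Ruling this out seems to demand either a genuinely new ``finitely strictly singular'' Borsuk--Ulam argument that produces inside $M_n$ a growing-dimensional subspace \emph{all} of whose vectors are $(1-o(1))$-far from $P_n$, or else the conjecture is false and one should search for a counterexample in the spirit of Theorems \ref{mm} and \ref{thm32} realized along a sequence with $\dim P_n\to\infty$. As a first concrete step I would attempt the case $X=L^p$ (and subspaces of $L^p$), where metric projections are explicit, before confronting the general case.
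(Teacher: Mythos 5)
The statement you are addressing is not proved in the paper at all: it appears there explicitly as an \emph{open conjecture} (Conjecture \ref{unknown}), introduced with the words that the analogue of Lemma \ref{limit} for finitely strictly singular $\lambda$-adjustment ``remains unproven (although it is obviously true for Hilbert spaces).'' Your proposal correctly refrains from claiming a proof, so there is no proof in the paper to compare it against; what can be assessed is whether your partial analysis is sound and consistent with the paper's own assessment, and it is. Your Hilbert-space argument is correct and fills in the paper's parenthetical remark: taking $K_n=M_n\cap P_n^{\perp}$, extracting an orthonormal (hence weakly null) unit sequence from any $(L_n)\prec(K_n)$ with $\dim L_n\to\infty$, and noting that the decomposition $z=z'+z''$ relative to $P_n$ gives $\dist(x_n-z,P_n)\ge\sqrt{1-2\langle x_n,z\rangle}\to 1$ uniformly in the choice of $(y_n)$, does show $\lambda_{\bullet}[L_n,P_n]\ge 1$, which contradicts finitely strictly singular $\lambda$-adjustment for $\lambda<1$ once $\dim M_n-\dim P_n\to\infty$. (One should only note that the splitting of $z$ depends on $n$, but the resulting inequality does not, so the argument survives.)

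For the general Banach-space case, the obstruction you identify is exactly the one the paper itself puts forward: Section \ref{fredholm} states that the answers to Questions 1 and 2 ``put a stop on an attempt to prove Conjecture \ref{unknown} where one would look for the `orthogonal' to $P_n$ subspaces of increasingly high dimensions inside $M_n$,'' and the counterexamples of Sections \ref{dimTwo}--\ref{32} show that iterating Theorem \ref{borsuk} cannot produce a subspace of dimension $>1$ \emph{all} of whose unit vectors are far from a given subspace. Your ``staircase'' construction illustrates precisely why: individual vectors $u_n^j$ at distance $1$ from $P_n$ do not force generic unit combinations in their span to stay away from $P_n$. So your proposal neither proves nor refutes the conjecture, but it is an accurate account of its status, and its only ``gap'' is the one the paper itself leaves open.
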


The answers to questions $1$ and $2$ from the previous sections put a stop on an attempt to prove Conjecture \ref{unknown} where one would look for the ``orthogonal'' to $P_{n}$ subspaces of increasingly high dimensions inside $M_{n}$.

Also note that the counter-examples from the previous sections are of an independent interest as they show that the $1$-dimensional orthogonality Theorem 2 from \cite{KKM} (see Theorem \ref{kkm}) cannot be extended to higher dimensions.

\end{document}